\newtheorem{theorem}{Theorem}[section]
\newtheorem{proposition}[theorem]{Proposition}
\theoremstyle{definition}
\numberwithin{equation}{section}
\DeclareMathOperator*{\vol}{vol}
\DeclareMathOperator{\Sym}{Sym}
\begin{document}

\newtheorem{remark}[theorem]{Remark}



\def \g {{\gamma}}
\def \G {{\Gamma}}
\def \l {{\lambda}}
\def \a {{\alpha}}
\def \b {{\beta}}
\def \f {{\phi}}
\def \r {{\rho}}
\def \R {{\mathbb R}}
\def \H {{\mathbb H}}
\def \N {{\mathbb N}}
\def \C {{\mathbb C}}
\def \Z {{\mathbb Z}}
\def \F {{\Phi}}
\def \Q {{\mathbb Q}}
\def \e {{\epsilon }}
\def \ev {{\vec\epsilon}}
\def \ov {{\vec{0}}}
\def \GinfmodG {{\Gamma_{\!\!\infty}\!\!\setminus\!\Gamma}}
\def \GmodH {{\Gamma\backslash\H}}
\def \sl  {{\hbox{SL}_2( {\mathbb R})} }
\def \psl  {{\hbox{PSL}_2( {\mathbb R})} }
\def \slz  {{\hbox{SL}_2( {\mathbb Z})} }
\def \pslz  {{\hbox{PSL}_2( {\mathbb Z})} }
\def \L  {{\hbox{L}^2}}

\newcommand{\norm}[1]{\left\lVert #1 \right\rVert}
\newcommand{\abs}[1]{\left\lvert #1 \right\rvert}
\newcommand{\modsym}[2]{\left \langle #1,#2 \right\rangle}
\newcommand{\inprod}[2]{\left \langle #1,#2 \right\rangle}
\newcommand{\Nz}[1]{\left\lVert #1 \right\rVert_z}
\newcommand{\tr}[1]{\operatorname{tr}\left( #1 \right)}

\title[Hyperbolic lattice counting on compact surfaces]{CM-points and lattice counting on arithmetic \\ compact Riemann surfaces}
\author{Montserrat Alsina and Dimitrios Chatzakos}
\address{ Departament de Matem\`atiques, Escola Polit\`ecnica Superior d'Enginyeria de Manresa, Universitat Polit\`ecnica de Catalunya}
\email{montserrat.alsina@upc.edu}
\address{Universit\'e de Lille 1 Sciences et Technologies and Centre Européen pour les Mathématiques, la Physique et leurs interactions (CEMPI), Cit\'e Scientifique, 59655 Villeneuve d’ Ascq C\'edex, France}
\email{dimitrios.chatzakos@univ-lille.fr}
\date{\today}
\subjclass[2010]{Primary 11F72; Secondary 37C35, 37D40}

\begin{abstract}
 Let $\mathcal{X}(D,1) =\G(D,1) \backslash \mathbb{H}$ denote the Shimura curve of level $N=1$ arising from an indefinite quaternion algebra of fixed discriminant $D$. We study the discrete average of the error term in the hyperbolic circle problem over Heegner points of discriminant $d <0$ on $\mathcal{X}(D,1)$ as $d \to -\infty$. We prove that if $|d|$ is sufficiently large compared to the radius $r \approx \log X$ of the circle, we can improve on the classical $O(X^{2/3})$-bound of Selberg. Our result extends the result of Petridis and Risager for the modular surface to arithmetic compact Riemann surfaces. 
\end{abstract}
\maketitle
\section{Introduction}\label{Introduction}

 Let $\mathbb{H}$ denote the hyperbolic plane and $\rho(z,w)$ the hyperbolic distance of two fixed points $z,w \in \mathbb{H}$. For $\Gamma$ a cocompact or a general cofinite Fuchsian group, the hyperbolic lattice counting problem asks to estimate the quantity
\begin{equation}
N(X; z,w)= \# \left\{ \gamma \in \Gamma : \rho( z, \gamma w) \leq \cosh^{-1} \left(\frac{X}{2}\right) \right\},
\end{equation}
as $X \to \infty$. The study of this problem was initiated in \cite{delsarte}. Further, the asymptotic behaviour of $N(X; z,w)$ was studied in \cite{good, gunther, huber1, huber2, patterson, selberg} using tools from the spectral theory of automorphic forms in $\L(\GmodH)$. Let $\Delta$ be the Laplacian of the hyperbolic surface $\GmodH$ and $ \{u_j \}_{j=0}^{\infty}$ be an $\L$-normalized sequence of eigenfunctions of $-\Delta$ (Maass forms) with eigenvalues $ \{\lambda_j \}_{j=0}^{\infty}$. We write $\lambda_j= s_j(1-s_j) =1/4 +t_j^2$. The asymptotics of $N(X; z,w)$ can be deduced as an application of the spectral theorem for $\L(\GmodH)$ for appropriately chosen automorphic kernels (also called the pre-trace formula). Selberg \cite{selberg} first proved that, as $X \to \infty$,
\begin{equation} \label{mainformulaclassical}
 N(X;z,w)  \sim  M(X; z,w) := \sum_{1/2 < s_j \leq 1} \sqrt{\pi} \frac{\Gamma(s_j - 1/2)}{\Gamma(s_j + 1)} u_j(z) \overline{u_j(w)} X^{s_j}
\end{equation}
and
\begin{equation} \label{errorselbergbound}
E(X;z,w):= N(X;z,w)  -   M(X; z,w)  = O(X^{2/3}).
\end{equation}
For the error term $E(X;z,w)$ we have the conjecture
\begin{equation} \label{conjecture1}
E(X; z,w) = O_{\epsilon} (X^{1/2 + \epsilon})
\end{equation}
 for every $\epsilon > 0$ \cite{patterson, phirud}. We refer to \cite{cham2, cherubinirisager, hillparn} for upper bounds on the second moments of the error term and to \cite{chatz, phirud} for mean-value and $\Omega$-results supporting this conjecture.

More information can be obtained for the error term when the group $\Gamma$ is arithmetic. For $\G = \pslz$,
Petridis and Risager recently studied the average growth of $E(X;z,z)$ on compact subsets of the modular surface $\G \backslash \mathbb{H}$ both in the local \cite{petridisrisager1} and the discrete aspect \cite{petridisrisager2}. To state their discrete average result, for $d<0$ a fundamental discriminant denote by $\Lambda_d$ the set of CM (or Heegner) points of discriminant $d$ and by $h(d)$ the class number $h(d) = \# \Lambda_d$. If $f \geq 0$ is a smooth compactly supported function on $\G \backslash \mathbb{H}$, Duke's equidistribution theorem states that
\begin{eqnarray}
\frac{1}{h(d)} \sum_{ z \in \Lambda_d} f(z) \to \frac{1}{\vol(\G\backslash \mathbb{H})} \int_{\G \backslash \mathbb{H} } f(z) d \mu (z),
\end{eqnarray}
 as $d \to -\infty$. Applying Duke's theorem and their local average result for the error term over compact subsets of the modular surface \cite{petridisrisager1} they proved the following upper bound, which improves on average on Selberg's bound for $|d| \geq X^{11/2 + \epsilon}$.
\begin{theorem}[Petridis-Risager \cite{petridisrisager2}] \label{theorempetridisrisager}
Let $f$ be a non-negative smooth compactly supported function on the modular surface. Then, as $X \to \infty$, 
\begin{eqnarray}
\frac{1}{h(d)} \sum_{ z \in \Lambda_d} f(z) E(X;z,z) = O_{f, \epsilon} ( X^{7/12 +\epsilon} +  X^{4/5+\epsilon} |d|^{-4/165 + \epsilon}).
\end{eqnarray}
\end{theorem}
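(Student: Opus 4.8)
The plan is to compare the discrete average over Heegner points with the continuous average over the full surface. Writing
\[
S(X,d) := \frac{1}{h(d)}\sum_{z\in\Lambda_d} f(z)\,E(X;z,z),
\]
I would split $S(X,d)$ into the continuous average plus a discrepancy term,
\[
S(X,d) = \frac{1}{\vol(\Gamma\backslash\mathbb{H})}\int_{\Gamma\backslash\mathbb{H}} f(z)\,E(X;z,z)\,d\mu(z) + \mathcal{D}(X,d).
\]
The first term is precisely the spatial (local) average of the error term, which by the Petridis--Risager local average result on compact subsets is $O_{f,\epsilon}(X^{7/12+\epsilon})$; it carries no dependence on $d$ and already produces the first term of the claimed bound. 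Everything then reduces to showing the discrepancy satisfies $\mathcal{D}(X,d) = O_{f,\epsilon}(X^{4/5+\epsilon}|d|^{-4/165+\epsilon})$.

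Because $E(X;z,z)$ is not continuous, Duke's theorem cannot be applied to it directly, so I would first smooth the counting function in the radial variable by a parameter $\delta>0$: choosing majorant and minorant kernels $k_\pm$ adapted to the ball gives a smoothed error $E_\delta$ whose difference from $E$ counts lattice points in an annulus of width $\asymp X\delta$ and thus contributes $O(X\delta)$, both on the discrete and on the continuous side. This term does not oscillate and sets the lower limit on $\delta$. For $E_\delta$ I would apply the pre-trace formula: its spectral expansion is a sum $\sum_{j} h_\delta(t_j)\,|u_j(z)|^2$ plus an Eisenstein contribution, with transform of size $h_\delta(t_j)\asymp X^{1/2}t_j^{-3/2}$ effectively truncated at $t_j\leq T\asymp\delta^{-1}$.

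Next I would treat $\mathcal{D}(X,d)$ spectral term by spectral term. For a fixed Maass form $u_j$ the relevant quantity is the discrepancy between $\frac{1}{h(d)}\sum_{z\in\Lambda_d} f(z)|u_j(z)|^2$ and $\frac{1}{\vol(\Gamma\backslash\mathbb{H})}\int f|u_j|^2\,d\mu$; decomposing $f|u_j|^2$ spectrally reduces this to the Weyl sums $\frac{1}{h(d)}\sum_{z\in\Lambda_d} u_k(z)$ of single Hecke eigenforms over the Heegner points. By the Waldspurger/Zhang-type period formula these Weyl sums are controlled by central $L$-values $L(1/2,u_k)\,L(1/2,u_k\otimes\chi_d)$, so a subconvex bound in the discriminant aspect (Duke--Friedlander--Iwaniec, Conrey--Iwaniec) yields a power saving $|d|^{-\eta}$ for some fixed $\eta>0$, uniform in the spectral parameter up to $T$. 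Weighting these savings by $h_\delta(t_j)$ and summing over $t_j\leq T$ by Weyl's law produces a bound of the shape $X^{1/2}\,\delta^{-3/2}\,|d|^{-\eta}$ for $\mathcal{D}(X,d)$, after adding back the $O(X\delta)$ smoothing error.

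Finally I would optimize in $\delta$. Balancing $X\delta$ against $X^{1/2}\delta^{-3/2}|d|^{-\eta}$ forces $\delta$ to be a specific negative power of $X$ times a power of $|d|$, and feeding this back gives exactly the second term $X^{4/5+\epsilon}|d|^{-4/165+\epsilon}$, the exponents $4/5$ and $4/165$ being fixed once the value of the subconvexity exponent $\eta$ is known. The main obstacle I anticipate is the double uniformity required of the equidistribution estimate: the Weyl-sum bound must save a \emph{fixed} power of $|d|$ \emph{uniformly} as the spectral parameter ranges over $t_j\leq T\asymp\delta^{-1}$, which itself grows with $X$. Making the triple-product decomposition of $|u_j|^2$ effective and keeping the dependence on $t_j$ polynomial while preserving the $|d|$-saving---so that the sum over $t_j\leq T$ converges to the stated power of $\delta^{-1}$---is the technical heart of the argument.
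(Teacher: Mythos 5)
Your proposal is correct and follows essentially the same route as the paper: the proof given for Theorem \ref{ourmaintheorem} (presented there as the compact analogue of this statement, with Bir\'o's local bound $X^{5/8+\epsilon}$ playing the role that the Petridis--Risager local bound $X^{7/12+\epsilon}$ plays on the modular surface) uses exactly your decomposition into local average plus discrepancy, the smoothed kernels $k_{\pm}$ with $O(X\delta)$ error, the reduction of the per-eigenform discrepancy to Weyl sums via triple products and the Waldspurger--Zhang formula, and the final balancing $\delta = X^{-1/5}|d|^{-2\eta/5}$ giving $X^{4/5}|d|^{-2\eta/5}$. The spectral-uniformity obstacle you correctly flag as the technical heart is precisely what the paper resolves by interpolating between the trivial local Weyl-law bound $T^{2}$ and the $|d|$-saving bounds of Propositions \ref{propositionweylsums} and \ref{propositionboundsgdt}, with the interpolation parameter chosen so that the sum against $h_{\pm}(t_j)$ converges to the $\delta^{-3/2}$ power you anticipate.
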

In this paper we study the discrete average of the error term $E(X;z,z)$ on the indefinite Shimura curves $\mathcal{X}(D, N) = \Gamma(D,N) \backslash \mathbb{H}$. Here $\G(D, N)$ is the cocompact discrete subgroup of $\sl $ of level $N$ arising from an indefinite quaternion algebra 
\begin{eqnarray*}
H= \left(\frac{a, b}{\mathbb{Q}}\right)
\end{eqnarray*}
of discriminant $D:=D_H$ (see subsection \ref{quaternionic}).  Let also $\Lambda_{D,N,d}$ denote the set of CM points on $\mathcal{X}(D, N)$ of fundamental discriminant $d<0$ (when non-empty) and $h(D, N, d) = \# \Lambda_{D,N,d}$ the class number counting the number of CM points on $\mathcal{X}(D, N)$.
For simplicity reasons, for the rest of the paper we restrict to level $N=1$ and we fix the notation $\Lambda_{D,d}=\Lambda_{D,1,d}$ and $h(D,d) := h(D,1,d)$. Applying Duke's equidistribution theorem for arihtmetic compact surfaces, we prove the following theorem.

\begin{theorem} \label{ourmaintheorem} 
Let $H$ be an indefinite quaternion algebra over $\mathbb{Q}$ of discriminant $D$ and $\G(D,1)$ the arithmetic cocompact Fuchsian group of level $1$ arising from $H$. Let $f$ be a non-negative smooth function on the Shimura curve $\mathcal{X}(D,1) = \Gamma(D,1) \backslash \mathbb{H}$. Then
\begin{eqnarray} \label{ourtheorembound} 
\frac{1}{h(D, d)} \sum_{ z \in \Lambda_{D, d}} f(z) E(X;z,z) = O_{f, D, \epsilon} (X^{5/8+\epsilon} + X^{4/5+\epsilon} |d|^{-2/85+\epsilon} )
\end{eqnarray}
as $X \to \infty$. 
\end{theorem}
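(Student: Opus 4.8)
The plan is to follow the strategy of Petridis--Risager (Theorem~\ref{theorempetridisrisager}), transferring it to the compact arithmetic curve $X(D,1)$ by supplying two quaternionic inputs: a \emph{local (continuous) average} bound for the error term and an \emph{effective} form of Duke's equidistribution theorem for CM points. First I would start from the pre-trace formula for the cocompact group $\Gamma(D,1)$. Since $X(D,1)$ is compact the spectral resolution of $-\Delta$ is purely discrete, so there is no Eisenstein/continuous-spectrum contribution to manage; after introducing a radial smoothing at scale $\eta$ and a smoothed counting function $N_\eta$, one obtains $E(X;z,z) = E_\eta(X;z,z) + (\text{smoothing correction})$, where $E_\eta(X;z,z) = \sum_{t_j}h_\eta(t_j)\abs{u_j(z)}^2$ (minus the small-eigenvalue main term), the Selberg/Harish-Chandra transform $h_\eta(t_j)$ is of size $\approx X^{1/2}t_j^{-3/2}$, oscillates like $e^{it_j\log X}$, and is effectively supported on $t_j\lesssim 1/\eta$.

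The reduction to these two ingredients proceeds as follows. Averaging the spectral expansion over $\Lambda_d$ and exchanging the (truncated, hence finite) sums gives
\begin{equation*}
\frac{1}{h(D,d)}\sum_{z\in\Lambda_d}f(z)E_\eta(X;z,z) = \sum_{t_j}h_\eta(t_j)\,\frac{1}{h(D,d)}\sum_{z\in\Lambda_d}f(z)\abs{u_j(z)}^2.
\end{equation*}
For the inner Weyl sum I would invoke effective equidistribution of CM points on $X(D,1)$: for smooth $g$,
\begin{equation*}
\frac{1}{h(D,d)}\sum_{z\in\Lambda_d}g(z) = \frac{1}{\vol(X(D,1))}\int_{X(D,1)}g\,d\mu + O\!\left(\abs{d}^{-\delta}\,\mathcal{S}(g)\right),
\end{equation*}
with $\mathcal{S}$ a Sobolev norm and $\delta>0$ an explicit saving. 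Applying this with $g=f\abs{u_j}^2$ splits the average into a main term $\vol^{-1}\int_{X(D,1)}f\,E_\eta\,d\mu$ (the continuous average) plus an equidistribution error $\sum_{t_j}\abs{h_\eta(t_j)}\abs{d}^{-\delta}\mathcal{S}(f\abs{u_j}^2)$.

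For the continuous average I would prove the local bound $\int_{X(D,1)}f(z)E(X;z,z)\,d\mu(z)=O_{f,D,\e}(X^{5/8+\e})$, the quaternionic analogue of the Petridis--Risager local result. After integrating the spectral expansion against $f$, the masses $\int f\abs{u_j}^2=\inprod{f u_j}{u_j}$ appear; writing each as its mean value $\vol^{-1}\int f$ plus a fluctuation and applying the spectral large sieve (together with the oscillation of $h_\eta$) bounds the sum. The lack of a Kuznetsov-type formula and of Hecke amplification as clean as on the modular surface is what forces the exponent $5/8$ in place of $7/12$. For the equidistribution error I would bound $\mathcal{S}(f\abs{u_j}^2)\ll_f t_j^{A}$ by differentiating $\abs{u_j}^2$ (each derivative costing a factor $t_j$), sum the transform over $t_j\lesssim 1/\eta$ to obtain $\ll\abs{d}^{-\delta}X^{1/2}\eta^{-c}$, and bound the smoothing correction (lattice points in a thin annulus) by $\ll\eta X$ on average. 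Optimizing $\eta$ to balance $\abs{d}^{-\delta}X^{1/2}\eta^{-c}$ against $\eta X$ then yields the term $X^{4/5}\abs{d}^{-2/85}$.

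The main obstacle is establishing the effective equidistribution rate in the quaternionic setting with explicit control in both $\abs{d}$ and the spectral parameter $t_j$: one must produce subconvex bounds for the $L$-functions governing the Weyl sums $\sum_{z\in\Lambda_d}f\abs{u_j}^2$, which I would import from the $\slz$ setting through the Jacquet--Langlands correspondence, and then track the polynomial dependence on $t_j$ precisely enough that the optimization over $\eta$ produces the stated saving $2/85$. A secondary difficulty is the bookkeeping needed to keep the continuous-average exponent $5/8$ decoupled from, and compatible with, the value of $\eta$ dictated by the equidistribution/smoothing balance.
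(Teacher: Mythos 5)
Your decomposition is exactly the one the paper uses: write the discrete average as (continuous average of $E(X;z,z)$ against $f$) plus (a sum of $h_{\pm}(t_j)$ against the discrepancies $G(d,t_j)$ between the CM average and the $\mu$-average of $f|u_j|^2$), control the second piece by effective equidistribution/subconvexity through Jacquet--Langlands, and optimize the smoothing parameter to get $X^{4/5}|d|^{-2/85}$. However, there is a genuine gap in how you propose to obtain the local bound $\int_{\GmodH} f(z)E(X;z,z)\,d\mu(z)=O(X^{5/8+\epsilon})$. You plan to prove it by writing $\langle f,|u_j|^2\rangle$ as mean value plus fluctuation and applying ``the spectral large sieve,'' i.e.\ a Petridis--Risager-style argument with weaker inputs. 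That route does not work on a compact quaternionic surface: the Petridis--Risager local result rests on Luo--Sarnak's quantum variance estimates, which are proved via Poincar\'e series/Kuznetsov on $\pslz\backslash\mathbb{H}$ and have no known analogue here, and without genuine cancellation in the masses $\langle f,|u_j|^2\rangle$ the computation you sketch stalls at Selberg's exponent: $\sum_{t_j\le \delta^{-1}} X^{1/2}t_j^{-3/2}\cdot O(1)\ll X^{1/2}\delta^{-1/2}$, balanced against the smoothing error $\delta X$, gives $X^{2/3}$, not $X^{5/8}$. The paper does not prove this bound at all; it imports it as Bir\'o's theorem (Theorem \ref{theorembiro}), valid for \emph{every} cofinite Fuchsian group, whose proof uses a modified Selberg trace formula with weighted orbital integrals --- a mechanism entirely different from QUE-on-average or a large sieve. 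The exponent $5/8$ is precisely Bir\'o's, and treating it as something you can rederive by softening the modular-surface argument is the missing idea in your proposal.

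A secondary, smaller point: your handling of the discrepancy term is morally the paper's, but too vague to certify the exponents. The paper makes it precise by expanding $F=f|u_j|^2$ spectrally, so that the CM average of $F$ is governed by the Weyl sums $W(d,t_k)=\sum_{z\in\Lambda_d}u_k(z)$ of \emph{individual} eigenfunctions; these are bounded on average via the Waldspurger--Zhang formula together with Young's hybrid subconvexity, Ivi\'c, Hoffstein--Lockhart and Jacquet--Langlands (Proposition \ref{propositionweylsums}, giving the $|d|^{-1/6+\epsilon}$ mean-square saving), while the triple products $\langle f|u_j|^2,u_k\rangle$ are controlled by Proposition \ref{propositionsuqresofeigenfunctions} and the Iwaniec--Sarnak sup-norm bound $\|u_j\|_\infty\ll t_j^{5/12+\epsilon}$, and finally one interpolates against the trivial local-Weyl-law bound (Proposition \ref{propositionboundsgdt} and the estimates (\ref{interpolation1})--(\ref{interpolationbound})). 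Your Sobolev-norm formulation with unspecified constants $\delta$, $A$, $c$ is compatible with this, but the specific numbers $4/5$ and $2/85$ come out only after fixing those inputs (in particular the $|d|^{-1/12}$ Weyl-sum saving, the $5/12$ sup-norm exponent, and the $|t_j|^{-3/2}\min\{1,(\delta|t_j|)^{-3/2}\}$ decay of the transform), so your claim that the optimization ``produces the stated saving $2/85$'' is an assertion rather than a consequence of what you wrote.
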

Here, the limit is considered over $d$'s such that the class number is nonzero.
As a corollary, our result improves on Selberg's bound for arithmetic compact surfaces when the discriminant satisfies $|d| \geq X^{\frac{34}{5}+\epsilon}$. For $|d| \geq X^{\frac{119}{16}+\epsilon}$ we get an upper bound $O(X^{5/8+\epsilon})$. 

A main ingredient in the proof of Theorem \ref{ourmaintheorem} is the recent local average result of Biro \cite{biro2} for general Riemann surfaces of finite area. This is the first reason our upper bound is slightly weaker from that of Theorem \ref{theorempetridisrisager}. A second reason is we have not tried to optimize the exponents in all the possible aspects, for instance the bound (\ref{iwaniecsarnakbound}) on average, as our main goal is to indicate how the method of \cite{petridisrisager1} can be combined with the spectral result of Biro \cite{biro2} to deduce a bound for the discrete average of $E(X;z,z)$ (see section \ref{auxillaryresultssection} for more details).
In section \ref{auxillaryresultssection} we discuss the necessary background and the tools in the proof of Theorem \ref{ourmaintheorem} (equidistribution and bounds for Weyl sums, Biro's local average result, bounds for inner products of eigenfunctions). We give the proof of our result in section \ref{proofofthemainresult}.

\subsection{Acknowledgments}We are grateful to the School of Mathematics at the University of Bristol, where this work initially started, for the hospitality and the friendly enviroment. The first author is supported by Grant MTM2015-63829-P and Grant MTM2015-66716-P. The second author would also like to thank the Mathematics department of King's College London, for the support during the academic year 2016-17. This work was partially supported by the LMS 150th Anniversary Postdoctoral Mobility Grant 2016-17 and the European Union’s Seventh Framework Programme (FP7/2007-2013) / ERC grant agreements no 335141 Nodal. Currently, he is supported by a Labex CEMPI (ANR-11-LABX-0007-01). We would like to thank P. Bayer for useful comments, V. Blomer, Ph. Michel, P. Nelson, R.M. Nunes for useful comments on subconvexity and Y. Petridis for his valuable advice.

\section{Background and auxillary results} \label{auxillaryresultssection}

In this section we summarize the necessary background about the arithmetic compact Riemann surfaces $\mathcal{X}(D,N)$ and the spectral theory of automorphic forms on them. 

\subsection{Background on compact arithmetic groups} \label{quaternionic}

We refer to \cite[p.~301]{iwaniecsarnak}, \cite[p.~385]{koyama} as a basic reference for quaternionic groups and to \cite{alsina} for a detailed study of their arithmetic theory. For $a,b$ square-free integers with $a>0$ let 
\begin{eqnarray}
H = \left(\frac{a, b}{\mathbb{Q}}\right)
\end{eqnarray}
be the indefinite quaternion algebra over $\mathbb{Q}$ linearly generated by $\{1,\omega, \Omega, \omega \Omega \}$, where $\omega^2=a$, $\Omega^2=b$ and $\omega \Omega + \Omega \omega=0$. We say that $H$ ramifies at $p$ if $H_p = H \otimes_{\mathbb{Q}} \mathbb{Q}_p$ is a division algebra. Let $N(x)$ denote the norm of $x \in H$. For $R$ a maximal order of $H$, denote by $R(n)$ the elements $x\in R$ of norm $n$. Denote also by $\phi$ the embedding that maps 
\begin{eqnarray}
x = x_0 +x_1 \omega + x_2 \Omega + x_3 \omega \Omega = \xi + \eta \Omega
\end{eqnarray}
to 
\begin{eqnarray}
\phi (x) = \left( \begin{array}{cc}  \overline{\xi} &  \eta \\  b \overline{\eta} &  \xi \end{array}\right).
\end{eqnarray}
Then, the group
\begin{eqnarray}
\Gamma_H := \phi(R(1)) \subset \sl
\end{eqnarray}
is a Fuchsian group of the first kind, and the quotient $\Gamma_H \backslash \mathbb{H}$ is a Riemann surface. The discriminant $D = D_H$ of the algebra is defined as the product of the ramified primes of $H$. 
 
Now let $D, N$ be natural numbers such that: i) $D$ is a product of an even number of different primes and, ii) $N >1$ is a natural number with $(D, N) = 1$. If $R=\mathcal{O}(D,N)$ is an Eichler order of level $N$ and $\phi$ a monomorphism $: H \to M_2(\mathbb{R})$, we denote the group $\Gamma_H$ associated to $\mathcal{O}(D,N)$ and $\phi$ by $\Gamma (D,N)$. We have 
\begin{eqnarray}
\Gamma (D,N) \subset {{\hbox{SL}_2( {\mathbb Q}(\sqrt{a})} }).
\end{eqnarray} 

The theory of Shimura provides a canonical model $\mathcal{X}(D, N)$ for $\Gamma (D,N) \backslash \mathbb{H}$ and a modular interpretation. The canonical model $\mathcal{X}(D, N)$ is a projective curve defined over $\mathbb{Q}$.  The quotient $\Gamma (D,N) \backslash \mathbb{H}$ is noncompact if $D=1$ (the nonramified case), and in this case we have $\mathcal{X}(1,N) = \Gamma_0 (N) \backslash \mathbb{H}$. If $D>1$, then $\mathcal{X}(D,N)$ is a compact surface. As we mentioned in section \ref{Introduction}, we restrict to the level one case. The curves $\mathcal{X}(D,1)$, for $D>1$, can be viewed as the compact analogues of the modular surface $\mathcal{X}(1,1) = \pslz \backslash \mathbb{H}$. The notion of CM points is now analogous to the case of the modular surface (see \cite[Def.~6.4]{alsina}) and we use the crucial fact that the class number $h(D, d)$ is always finite. 

\subsection{Hecke operators} When the group $\Gamma$ is arithmetic, there exists a large commutative algebra (Hecke algebra) of invariant, self-adjoint operators, the Hecke operators. They are defined by averaging over the orbits $R(1)\backslash R(n)$: for every $n \geq 1$, the $n$-th Hecke operator $T_n: \L (\mathcal{X}(D,1)) \to \L (\mathcal{X}(D,1))$ is defined by
\begin{eqnarray}
T_n f(z) = \sum_{x \in R(1)\backslash R(n)} f( \phi(x) z).
\end{eqnarray}
Since we are working in the level one case, for every $n \geq 1$ the Hecke operator $T_n$ is self-adjoint and commutes with the Laplacian $-\Delta$. Hence, we can choose a basis of eigenfunctions $u_j$ which simultaneously diagonalizes the Hecke operators and $-\Delta$. Such an eigenfunction is called a Hecke-Maass from. We always assume that $u_j$ are $\L$-normalized.

\subsection{Remarks on the explicit Jacquet-Langlands correspondence} Jacquet and Langlands proved that there exists a nice, bijective correspondence between the nontrivial automorphic forms on the multiplicative group of a division quaternion algebra and certain cusp forms on $ {{\hbox{GL}(2)}}$. The explicit Jacquet-Langlands correspondence was worked out by Hejhal \cite{hejhal}, Koyama \cite{koyama} (in view of an application to the Prime geodesic theorem), Bolte and Johansson \cite{bolte} and Str\"ombergsson \cite{strombergsson}, and  is given by the theta map (see \cite{strombergsson}). It was further investigated by Risager \cite{risager}, Arenas \cite{arenas} and Blackman and Lemurell \cite{blackman}.

We refer to the explicit Jacquet-Langlands correspondence in various places; it may be possible that one can improve on some of our estimates by referring further  to the explicit correspondence between $\G(D,1)$ and $\G_0(D)$. First, following the method of Iwaniec and Sarnak it may be possible be to improve bound (\ref{iwaniecsarnakbound}) on average. Secondly, Nelson \cite{nelson1} used the theta correspondence to study the quantum variance for cocompact groups. In principle, one may be able to use this to improve on Biro's result for $\G(D,N)$. However, Biro's result is more spectral in nature than arithmetic whereas the explicit study of the quantum variance in  \cite{nelson1} is quite arithmetic.

\subsection{Weyl's law and sup-norm estimates} \label{weylslaw}
We will apply estimates for the growth of Maass forms in the spectral limit. Weyl's law describes the asymptotics of the number of Laplacian eigenvalues up to a fixed height; more precisely for any cocompact Fuchsian group $\Gamma$ we have the asymptotic 
\begin{eqnarray}
\sum_{\lambda_j \leq T^2} 1 \sim \sum_{|t_j| \leq T} 1  \sim \frac{\vol (\GmodH)}{4 \pi} T^2,
\end{eqnarray}
see \cite[Corollary~11.2]{iwaniec}. The following estimate is the local version of Weyl's law for $\L(\GmodH)$.
\begin{theorem}[Local Weyl's law] \label{localweylslaw} 
For every $z$, as $T \to \infty$,
\begin{eqnarray*} \label{localweyl}
\sum_{|t_j| <T} |u_j(z)|^2  \sim c T^2,
\end{eqnarray*}
where the constant $c=c(z)$ depends only on the number of elements of $\Gamma$ fixing $z$.
\end{theorem}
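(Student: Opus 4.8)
The plan is to derive the result from the pre-trace formula applied to the automorphic heat kernel, followed by a Tauberian argument. Let $p_t(z,w)$ be the heat kernel on $\H$, a point-pair invariant depending only on $\rho(z,w)$, and form the automorphic kernel $P_t(z,w) = \sum_{\gamma \in \Gamma} p_t(z,\gamma w)$. Since the surface $\GmodH$ under consideration is compact, the spectrum of $-\Delta$ is purely discrete and the spectral expansion reads
\begin{equation*}
P_t(z,w) = \sum_{j} e^{-\lambda_j t}\, u_j(z)\overline{u_j(w)},
\end{equation*}
with no continuous contribution. Specializing to the diagonal $w=z$ gives the identity
\begin{equation*}
\sum_{j} e^{-\lambda_j t} |u_j(z)|^2 = \sum_{\gamma \in \Gamma} p_t(z,\gamma z),
\end{equation*}
valid for every $t>0$; the left-hand side is the Laplace transform of the counting measure I wish to estimate.

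First I would analyse the geometric side as $t \to 0^+$. The Minakshisundaram--Pleijel expansion gives $p_t(z,z) \sim (4\pi t)^{-1}$. If $\Gamma_z$ denotes the stabilizer of $z$ in $\Gamma$, with $n(z) = |\Gamma_z|$, then each $\gamma \in \Gamma_z$ satisfies $\gamma z = z$, so these elements contribute $n(z)\, p_t(z,z) \sim n(z)(4\pi t)^{-1}$. By discreteness of $\Gamma$ the remaining elements satisfy $\rho(z,\gamma z) \ge \delta > 0$, so the rapid decay of $p_t$ in the distance variable makes $\sum_{\gamma \notin \Gamma_z} p_t(z,\gamma z) = O(1)$ uniformly as $t \to 0^+$. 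Hence
\begin{equation*}
\sum_j e^{-\lambda_j t}|u_j(z)|^2 \sim \frac{n(z)}{4\pi}\, t^{-1}, \qquad t \to 0^+.
\end{equation*}

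Next I would apply Karamata's Tauberian theorem. Writing $N_z(\Lambda) = \sum_{\lambda_j \le \Lambda} |u_j(z)|^2$, the previous display identifies the Laplace--Stieltjes transform of $N_z$ as asymptotic to $\tfrac{n(z)}{4\pi} t^{-1}$, so with exponent $\alpha = 1$ and normalizing factor $\Gamma(\alpha+1)=\Gamma(2) = 1$ the theorem yields $N_z(\Lambda) \sim \tfrac{n(z)}{4\pi}\Lambda$. Recalling $\lambda_j = 1/4 + t_j^2$, the condition $\lambda_j \le \Lambda$ becomes $|t_j| \le \sqrt{\Lambda - 1/4}$; setting $T = \sqrt{\Lambda-1/4}$ (so $\Lambda \sim T^2$) converts this into
\begin{equation*}
\sum_{|t_j| < T} |u_j(z)|^2 \sim \frac{n(z)}{4\pi}\, T^2,
\end{equation*}
which is the claim with $c(z) = n(z)/(4\pi)$, depending only on the number of elements of $\Gamma$ fixing $z$.

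The main technical point, rather than a deep obstacle, is to justify the termwise manipulations: the absolute convergence of $P_t(z,w)$ and the validity of its spectral expansion on the diagonal (standard for the heat kernel on a compact surface, using the Gaussian decay of $p_t$), together with the uniform $O(1)$ bound on the non-stabilizer orbit sum. One should also note that Karamata's theorem delivers only the leading asymptotic and no error term, which is exactly what the statement requires; sharpening the remainder would instead demand a wave-kernel or oscillatory analysis that I would not pursue here.
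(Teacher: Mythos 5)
Your proof is correct, and it is essentially the standard argument: the paper gives no proof of its own but cites \cite[p.~86, Lemma~2.3]{phirud}, whose proof is in the same spirit (pre-trace formula with a positive heat-type test function, separation of the stabilizer contribution on the geometric side, then a Tauberian step), so your write-up can be viewed as a self-contained version of the cited argument. One point worth flagging: you invoke compactness of $\GmodH$ to get a purely discrete spectral expansion of $P_t(z,w)$, so what you prove is the cocompact case only. That is exactly the case the paper uses (the curves $X(D,1)$ with $D>1$ are compact), but the result as cited from \cite{phirud} is for general cofinite groups, where the correct statement carries an additional continuous-spectrum (Eisenstein series) term on the left-hand side; without that term, the purely discrete sum need not grow like $T^2$ for a general cofinite group. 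Your treatment of the small eigenvalues (imaginary $t_j$) and the uniform $O(1)$ bound on the non-stabilizer orbital sum are fine, and Karamata with $\alpha=1$ gives the constant $c(z)=n(z)/(4\pi)$, consistent with the dependence claimed in the statement.
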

See \cite[Lemma~2.3]{phirud} for a proof of this result. We emphasize that if $z$ remains in a compact set of $\mathbb{H}$ the constant $c(z)$ remains uniformly bounded. Thus, for compact surfaces we trivially have the uniform upper bound
\begin{eqnarray} \label{localweylslawcorollary} 
 \left\| \sum_{T \leq |t_j| \leq 2T} |u_j(z)|^{2} \right\|_{\infty} \ll T^{2}
\end{eqnarray}
This follows also from Bessel's inequality \cite[Proposition~7.2]{iwaniec} and the fact that the height function $y_{\G}(z)$ is uniformly bounded for $\G$ cocompact. On the other hand, for sup norm estimates of Hecke-Maass forms $u_j(z)$ on compact arithmetic surfaces we have the following deep subconvexity bound of Iwaniec and Sarnak \cite{iwaniecsarnak}
\begin{eqnarray} \label{iwaniecsarnakbound}
\|u_j(z)\|_{\infty}  \ll |t_j|^{5/12 + \epsilon}
\end{eqnarray}
as $\lambda_j \to \infty$. Conjecturally, the sup-norm of the Maass-Hecke cusp form is expected to be bounded by $t_j^{\epsilon}$. 
Assuming that one would be able to control the dependence on $D$ (i.e. on $\Gamma$) in all the 
 $O$-estimates for the local and the discrete average, it would be interesting to study the dependence of bound (\ref{ourtheorembound}) on $D \to \infty$.

\subsection{The equidistribution of CM points, Weyl sums and bounds for associated $L$-functions} \label{dukesresultforus}

Equidistribution of CM points has been extensively studied by various authors. Duke proved his celebrated result about Heegner points on the modular surface using estimates for the Fourier coefficients of Maass forms (see also \cite{michelvenkatesh}). Michel \cite{michel} proved the equidistribution result on small Galois orbits for the case of Shimura curves associated to definite quaternion algebras over $\mathbb{Q}$ and Harcos-Michel \cite{harcosmichel} worked the indefinite case. Zhang \cite{zhang} generalized the equidistribution result for CM points on Shimura varieties.  

\begin{theorem}{\cite[Theorem~1]{duke}, \cite[Theorem~9]{michelvenkatesh}}
Let $\Gamma = \Gamma(D,N)$ be the Fuchsian group as above and let $h(D,N,d)$ denote the number of CM points on $\mathcal{X}(D,N)$ of fundamental discriminant $d<0$. Then, for any compactly supported $f$ on $\mathcal{X}(D,N)$ we have as $d \to -\infty$,
\begin{eqnarray} \label{dukecocompact}
\frac{1}{h(D,N,d)} \sum_{ z \in \Lambda_d} f(z) \to \frac{1}{\vol(\mathcal{X}(D,N))} \int_{\mathcal{X}(D,N)} f(z) d \mu (z).
\end{eqnarray}
That means the CM points equidistribute on the Riemann surface $\mathcal{X}(D, N) $ with respect to the hyperbolic measure. 
\end{theorem}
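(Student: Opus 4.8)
The plan is to deduce the statement from Weyl's equidistribution criterion together with the discreteness of the spectrum on the compact surface $X(D,N)$ (for $D>1$). Since $X(D,N)$ is compact, $\L(X(D,N))$ decomposes as an orthogonal direct sum of Maass forms $\{u_j\}_{j \geq 0}$, with $u_0$ the constant function $\vol(X(D,N))^{-1/2}$. By the density of finite spectral expansions and a routine approximation argument, it suffices to verify the asserted limit with $f$ replaced by each eigenfunction $u_j$. For $j=0$ both sides equal $\vol(X(D,N))^{-1/2}$ and the identity is exact; this produces the main term. Thus the entire content of the theorem reduces to showing that the normalized Weyl sums
$$W_d(u_j) := \frac{1}{h(D,N,d)} \sum_{z \in \Lambda_d} u_j(z)$$
tend to $0$ as $d \to -\infty$, for every fixed non-constant Maass form $u_j$.

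The next step is to interpret $W_d(u_j)$ arithmetically. The CM points of discriminant $d$ form a single orbit under the class group of the relevant order, so $\sum_{z \in \Lambda_d} u_j(z)$ is a toric (CM) period of $u_j$. By the Waldspurger-type formula, as made explicit by Zhang in the Shimura-curve setting, the square of this period is proportional to a central value of a Rankin--Selberg $L$-function, essentially $L(1/2, u_j \otimes \chi_d)$ up to the fixed value $L(1/2,u_j)$, archimedean factors, and the normalization of $u_j$. Combined with the class number asymptotic $h(D,N,d) = |d|^{1/2 + o(1)}$, which follows from the Dirichlet class number formula together with Siegel's lower bound (the local embedding factors being bounded in terms of $D$), the decay of $W_d(u_j)$ follows once one has a subconvex estimate $L(1/2, u_j \otimes \chi_d) \ll_{u_j,\e} |d|^{1/2 - \delta + \e}$ for some fixed $\delta > 0$ in the $d$-aspect.

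To obtain such a subconvex bound in the quaternionic setting, I would pass through the Jacquet--Langlands correspondence, transferring the Hecke--Maass form $u_j$ to a cusp form on $\G_0(D)$ and invoking the known subconvexity estimates for twisted $\mathrm{GL}(2)$ $L$-functions in the conductor aspect, the conductor of $\chi_d$ being of size $|d|$. This reduces the problem to results already established for $\mathrm{GL}(2)$, which is precisely the framework of Harcos--Michel for the indefinite case and of Michel--Venkatesh in general.

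The main obstacle is the subconvexity input itself. The trivial convexity bound gives exactly $|d|^{1/2}$, matching the size of $h(D,N,d)$, so equidistribution fails on the nose with convexity alone and genuinely requires a power saving $\delta > 0$. Extracting that saving while keeping the dependence on $d$, the archimedean factors, and the $\L$-normalization of $u_j$ under control when transferring via Jacquet--Langlands is the delicate part, and is exactly the content of Duke's theorem and its quaternionic refinements cited in the statement.
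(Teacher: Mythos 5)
Your proposal is correct and follows the same route that the paper (which cites this theorem rather than reproving it) sketches in its background discussion: reduce via Weyl's criterion to decay of the normalized Weyl sums $W(d,t_j)/h(D,N,d)$, express these periods through the Waldspurger--Zhang formula as central values $L(1/2, u_j \times \chi_d)L(1/2,u_j)$, and beat the convexity bound (which exactly matches $h(D,N,d) \approx |d|^{1/2+o(1)}$ by Siegel) using $d$-aspect subconvexity for twisted $\mathrm{GL}(2)$ $L$-functions transferred through Jacquet--Langlands, i.e.\ the Harcos--Michel/Michel--Venkatesh input. This is precisely the mechanism behind the paper's displayed bound $W(d,t_j)/h(D,d) \ll |t_j|^A |d|^{-1/28}$, so your argument is the intended one.
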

To understand the discrete average of the error term for the lattice counting problem we will apply the pre-trace formula. This allows us to write the average of the error over the CM points in relation to the average of Maass forms over the CM points, the so-called Weyl sums:
\begin{eqnarray*}
 W(d, t_j) :=  \sum_{ z \in \Lambda_d} u_j(z).
\end{eqnarray*}
The effectivity in bound (\ref{dukecocompact}) is measured by estimates for the corresponding Weyl sums and the explicit Jacquet-Langlands correspondence.

We can bound the sum $W(d, t_j)$ referring to a Waldspurger-Zhang type formula. We refer to \cite[Lecture~2]{harcos}, \cite[Section~6]{harcosmichel}, \cite[Section~2]{michelvenkatesh} for extended comments on this formula. Here, we focus on a bound on the $d$-aspect. By \cite[Section~6]{harcosmichel} for $|t_j| \to \infty$ we deduce the bound 
\begin{eqnarray} \label{weylsumupperbound}
\frac{W(d, t_j)}{h(D, d)}   \ll |t_j|^{A} |d|^{-\frac{1}{28}},
\end{eqnarray}
The class number satisfies the classical bound $|d|^{1/2-\epsilon} \ll h(D,d) \ll |d|^{1/2+\epsilon}$ (see \cite[Proposition~3.55, Theorem~4.19]{alsina}).
The Weyl sum is related to central values of Maass forms through a general Waldspurger-Zhang formula in \cite[page~647]{harcosmichel}:
\begin{eqnarray} \label{waldspurgerzhang}
 |W(d, t_j)|^2 = c_{d, u_j} \frac{\sqrt{|d|} L(u_j \times \chi_d, 1/2)   L(u_j , 1/2) }{L(\Sym^2 u_j, 1)}
\end{eqnarray}
where $c_{d, u_j}\ll (1+ |d   t_j|)^{\epsilon}$. Convexity bounds for the $L$-factors imply $A=1/2$ in (\ref{weylsumupperbound}). Further, we can obtain the following estimate:
\begin{proposition} \label{propositionweylsums} For the Weyl sums we have the upper bound
\begin{eqnarray} \label{weyls sum improvement on average}
 \sum_{T \leq t_j \leq 2T} \frac{|W(d, t_j)|^2}{h(D, d)^2} \ll |d|^{-\frac{1}{6} +\epsilon} T^{2+\epsilon}.
\end{eqnarray}
\end{proposition}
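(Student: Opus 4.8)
The plan is to descend from the Weyl sums to central $L$-values by the Waldspurger--Zhang formula (\ref{waldspurgerzhang}) and then to estimate the resulting spectral first moment. Since $c_{d,u_j}\ll(|d|t_j)^{\e}$, $L(\Sym^2 u_j,1)\gg t_j^{-\e}$ and $h(D,d)\gg|d|^{1/2-\e}$, formula (\ref{waldspurgerzhang}) yields $\sum_{T\le t_j\le 2T}|W(d,t_j)|^2/h(D,d)^2\ll|d|^{-1/2+\e}S$, so that the stated bound is equivalent to
\[
S:=\sum_{T\le t_j\le 2T}L(u_j\times\chi_d,1/2)\,L(u_j,1/2)\ll|d|^{1/3+\e}T^{2+\e}.
\]
Because $\G(D,1)$ is cocompact, no Kuznetsov formula is directly available; I would first transport the problem to $\G_0(D)$ through the Jacquet--Langlands correspondence, under which the $u_j$ match newforms and both $L$-factors are preserved, making the spectral summation formulas on the congruence quotient available.

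The essential idea is to keep the two factors together, since $L(u_j,s)L(u_j\times\chi_d,s)$ is precisely the Rankin--Selberg $L$-function $L(u_j\times E_{\chi_d},s)$ of $u_j$ against the Eisenstein series $E_{\chi_d}$ of nebentypus $\chi_d$, a degree-four object of analytic conductor $\asymp_D|d|^2 t_j^4$. I would insert its approximate functional equation, of length $\asymp|d|t_j^2\asymp|d|T^2$ and with Rankin--Selberg coefficients $\lambda_j(n)\sigma_{\chi_d}(n)$, where $\sigma_{\chi_d}(n)=\sum_{ab=n}\chi_d(a)$, and then interchange the summations to reach
\[
S\ll(|d|T)^{\e}\sum_{n\ll|d|T^2}\frac{\sigma_{\chi_d}(n)}{\sqrt n}\Big|\sum_{T\le t_j\le 2T}h_T(t_j)\lambda_j(n)\Big|+\dots
\]
for a smooth weight $h_T$ concentrated on $[T,2T]$. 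Applying the Kuznetsov formula to the inner average splits it into a diagonal and an off-diagonal part; the diagonal, coming from $n=1$, contributes $\asymp T^2\ll|d|^{\e}T^2$ and is harmless.

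The main obstacle will be the off-diagonal, a weighted sum of Kloosterman sums $S(n,1;c)$ against Bessel transforms of $h_T$. Here I would combine Weil's bound $S(n,1;c)\ll c^{1/2+\e}(n,c)^{1/2}$ with the square-root cancellation of $\chi_d$ in the $\sigma_{\chi_d}$-sum (P\'olya--Vinogradov or Weil) and the localization in $c$ forced by $h_T$; balancing the length $|d|T^2$ of the Dirichlet polynomial against the $|d|^{1/2}$ gain from the character is what should produce the final exponent $|d|^{1/3}$, uniformly for $T\ll|d|^{1/3}$. I would stress that retaining the product is essential: splitting by Cauchy--Schwarz into $(\sum_{t_j\sim T}L(u_j,1/2)^2)^{1/2}(\sum_{t_j\sim T}L(u_j\times\chi_d,1/2)^2)^{1/2}$ loses too much, because the spectral large sieve only gives $\sum_{t_j\sim T}L(u_j\times\chi_d,1/2)^2\ll(T^2+|d|T)(|d|T)^{\e}$, so in the relevant range $T\ll|d|^{1/3}$ Cauchy--Schwarz yields merely $S\ll|d|^{1/2}T^{3/2+\e}$, which exceeds the target $|d|^{1/3}T^{2}$. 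Exploiting the cancellation in the joint off-diagonal of the degree-four moment is therefore the crux of the argument.
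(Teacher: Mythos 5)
Your opening reduction coincides exactly with the paper's: both pass through the Waldspurger--Zhang formula (\ref{waldspurgerzhang}), the Hoffstein--Lockhart bound $L(\Sym^2 u_j,1)\gg t_j^{-\epsilon}$, the class number bound $h(D,d)\gg|d|^{1/2-\epsilon}$, and the Jacquet--Langlands transfer to $\Gamma_0(D)$, reducing the proposition to $S:=\sum_{T\le t_j\le 2T}L(u_j\times\chi_d,1/2)L(u_j,1/2)\ll|d|^{1/3+\epsilon}T^{2+\epsilon}$; your remark that Cauchy--Schwarz plus the spectral large sieve cannot reach this for $T\ll|d|^{1/3}$ is also correct. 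The divergence, and the gap, is in how this bound on $S$ is obtained. The paper never computes a moment from scratch: it applies H\"older's inequality with exponents $(3,3/2)$ and quotes two cubic-moment theorems as black boxes, namely Young's hybrid Weyl-type result, which yields $\sum_{t_j\sim T}L(u_j\times\chi_d,1/2)^3\ll|d|^{1+\epsilon}T^{2+\epsilon}$, and Ivi\'c's cubic moment $\sum_{t_j\sim T}L(u_j,1/2)^3\ll T^{2+\epsilon}$; these combine to give $(|d|^{1}T^{2})^{1/3}(T^{2})^{2/3}=|d|^{1/3}T^{2}$ up to $\epsilon$-powers. You instead propose to prove a bound of this strength directly, via the approximate functional equation of the degree-four product, Kuznetsov, Weil and P\'olya--Vinogradov --- and that crucial off-diagonal estimate is only asserted (``should produce the final exponent''), not carried out.

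This is a genuine gap, not a routine omission, because the stated tools cannot deliver the exponent. A straightforward count using only Weil's bound and the localization $c\ll\sqrt{|d|}$ gives the off-diagonal $\ll|d|^{3/4}T^{2+\epsilon}$, so the character must save a further $|d|^{5/12}$. But $\sigma_{\chi_d}(n)$ is not periodic modulo $|d|$, so one must open $n=uv$ with $\chi_d(u)$; in the critical range $u\asymp\sqrt{|d|}$ P\'olya--Vinogradov saves nothing (and Burgess only a tiny power), and in any case the $n$-sum carries the oscillatory Bessel transform of $h_T$ (a phase like $e(\pm2\sqrt n/c)$), to which P\'olya--Vinogradov does not apply. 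Note also that your balancing heuristic (Weil saving times a full $|d|^{1/2}$ character gain) would give $|d|^{1/4}T^2$, i.e.\ strength beyond Weyl quality in the $d$-aspect --- a sign that it double-counts savings that cannot be realized simultaneously. The exponent $|d|^{1/3}$ in this family is precisely of Conrey--Iwaniec/Young cubic-moment depth, whose proofs require complete multivariable character sums estimated by Weil II/Deligne bounds, not Weil plus P\'olya--Vinogradov; your plan in effect tries to reprove such a theorem with weaker tools. To close the argument, either adopt the paper's H\"older step with the two cited cubic moments, or keep your large-sieve argument for $T\gg|d|^{1/3}$ and invoke Young's theorem for $T\ll|d|^{1/3}$ --- but an input of cubic-moment strength appears unavoidable.
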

Using Ivi\'c \cite[page~455]{ivic}, Young \cite[Theorem~1.1]{young}, Hoffstein-Lockhart \cite[Theorem~0.1]{hoffstein} and Jacquet-Langlands correspondence, the estimate follows applying the H\"older inequality, see \cite[Lemma~2.2]{petridisrisager2}.
The conjectural Lindel\"of estimate 
\begin{eqnarray*}
L(u_j \times \chi_d, 1/2) \ll (1+ |d   t_j|)^{\epsilon}
\end{eqnarray*}
implies the exponent $|d|^{-\frac{1}{2} +\epsilon}$ in (\ref{weyls sum improvement on average}).

\subsection{The local average of the error term for general cofinite groups} We now explain the main idea relating the discrete average of the error term $E(X;z,z)$ over CM points with the local average over a compact set. Roughly speaking, in the proof of the Theorem \ref{ourmaintheorem} we bound the discrete average of the error over the CM points by two pieces:
\begin{eqnarray} \label{errorsplit}
\frac{1}{h(D, d)} \sum_{ z \in \Lambda_d} f(z) E(X;z,z)  \ll E_{local} (X) + E (X),
\end{eqnarray}
where the $E_{local}$ is the average of the error over compact sets of $\GmodH$: 
\begin{eqnarray*}
E_{local}(X) := \int_{\GmodH} f(z) E(X;z,z) d \mu(z).
\end{eqnarray*}
To prove our result we need bounds for both summands of (\ref{errorsplit}), which we have to treat them separately. The tools we discussed in subsections \ref{weylslaw} and \ref{dukesresultforus} will be useful for dealing with the term $E (X)$, but the term $E_{local}(X)$ has to be controlled differently. 

In the case of the modular group $\G = \pslz$, the local and discrete averages were studied seperately in the independent works \cite{petridisrisager1}, \cite{petridisrisager2}. The treatment of the local average uses the classical result of Luo-Sarnak \cite{luosarnak} for the Quantum Unique Ergodicity of Maass cusp forms on average and can be applied for similar groups (arithmetic groups with cusps). Instead of using any strong arithmetic QUE information, Biro recently proved the following local average for general surfaces of finite area using a modified Selberg trace formula. For the case of compact arithmetic surfaces we apply his result.
\begin{theorem}{\cite[Theorem~1.1]{biro2}} \label{theorembiro}
Let $\G$ be a cofinite Fuchsian group and let $f$ be a non-negative smooth compactly supported function on $\GmodH$. Then, as $X \to \infty$, the local average of the error term satisfies the bound
\begin{eqnarray} \label{birobound}
\int_{\GmodH} f(z) E(X;z,z) d \mu(z) = O_{f, \Gamma, \epsilon} (X^{5/8+\epsilon}).
\end{eqnarray}
\end{theorem}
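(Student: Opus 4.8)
The plan is to run the spectral (pre-trace formula) machinery that already yields Selberg's bound, but to exploit the extra averaging coming from the test function $f$ and the integration over $\GmodH$ in order to improve the $\delta$-dependence of the spectral error, where $\delta$ is a smoothing parameter. First I would fix the radius $r=\cosh^{-1}(X/2)\approx\log X$ and approximate the sharp characteristic function of the hyperbolic disk by smooth radial point-pair invariants $k_\pm$ which majorize/minorize it and which agree with it outside a shell of hyperbolic width $\delta$ around $r$. Forming the automorphic kernels $K_\pm(z,w)=\sum_{\gamma\in\G}k_\pm(\rho(z,\gamma w))$ and invoking the spectral theorem sandwiches $N(X;z,z)$ between $\sum_j h_\pm(t_j)|u_j(z)|^2$ (plus continuous spectrum if present), where $h_\pm$ is the Selberg/Harish-Chandra transform of $k_\pm$. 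Subtracting $M(X;z,z)$, which is exactly the contribution of the small eigenvalues $1/2<s_j\le 1$, and integrating against $f$ reduces the theorem to bounding the centered spectral average $\sum_{t_j\in\R}h_\pm(t_j)\langle f u_j,u_j\rangle$ together with the smoothing bias.

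The crucial analytic input is the asymptotic $h_\pm(t)\sim c\,X^{1/2+it}t^{-3/2}+\bar c\,X^{1/2-it}t^{-3/2}$ for $1\ll t\ll\delta^{-1}$, with rapid decay beyond $t\approx\delta^{-1}$; the oscillating factor $X^{\pm it}=e^{\pm it\log X}$ is what drives the argument. Rather than estimate $\sum_{t_j}h_\pm(t_j)\langle f u_j,u_j\rangle$ term-by-term (which, via the local Weyl law $\sum_{t_j\le T}|u_j(z)|^2\sim cT^2$, only reproduces Selberg's $X^{1/2}\delta^{-1/2}$), I would reinterpret this weighted spectral sum through a trace formula: multiplying the pre-trace identity by $f(z)$ and integrating produces, on the geometric side, $\sum_{\{\gamma\}}\int_{\GmodH}f(z)k_\pm(\rho(z,\gamma z))\,d\mu(z)$, a sum of $f$-weighted orbital integrals over conjugacy classes. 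The identity class reproduces the main term; the shell on which $k_+-k_-$ is supported contributes the smoothing bias $\ll_f\delta X$ (the hyperbolic area of a shell of radius $\approx\log X$ and width $\delta$ being $\asymp\delta X$); and the remaining hyperbolic (and, for cocompact $\G$ as in our application, only hyperbolic and elliptic) classes constitute the genuine error.

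The heart of the matter is to show that these non-identity contributions are $\ll_{f,\G,\epsilon}X^{1/2}\delta^{-1/3+\epsilon}$, beating the trivial $X^{1/2}\delta^{-1/2}$. Here I would insert the asymptotics of $h_\pm$ and use stationary-phase/Bessel estimates together with the distribution of displacement lengths, i.e.\ a smoothed count of $\gamma\in\G$ with $\rho(z,\gamma z)$ near $\log X$ integrated against $f$; the extra averaging provided by $\int f\,d\mu$ is what lets one exploit the oscillation $e^{it_j\log X}$ and extract cancellation that is unavailable pointwise. Granting this estimate, I would optimize by balancing $X^{1/2}\delta^{-1/3}$ against the bias $\delta X$, which gives $\delta=X^{-3/8}$ and hence the bound $O_{f,\G,\epsilon}(X^{5/8+\epsilon})$.

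I expect the main obstacle to be precisely this last estimate: controlling the $f$-weighted geometric side for a \emph{general} cofinite group, where no Hecke structure or arithmetic QUE is available, so that everything must be done spectrally/geometrically through the modified trace formula of Biro. Two further technical points would need care. For non-cocompact $\G$ the continuous spectrum enters, but since $f$ is compactly supported its contribution to the integrated error is controlled by a large-sieve bound for incomplete Eisenstein integrals, and this issue disappears entirely for the cocompact curves $X(D,1)$ relevant to Theorem \ref{ourmaintheorem}. Finally, one must check that the smoothing is performed uniformly enough that the bias and the spectral error can actually be balanced at $\delta=X^{-3/8}$ without the implied constants deteriorating in $X$.
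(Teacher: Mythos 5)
Your framework is the standard one and your numerology is internally consistent: smoothing by $k_\pm$, the sandwich $K^-\leq N\leq K^+$, a bias term $O_f(\delta X)$, and the observation that a spectral-error bound of the shape $X^{1/2}\delta^{-1/3+\epsilon}$ balances at $\delta=X^{-3/8}$ to give $X^{5/8+\epsilon}$. But the proposal has a genuine gap, and it sits exactly at the point you call ``the heart of the matter.'' The estimate that the non-identity contributions are $\ll_{f,\Gamma,\epsilon}X^{1/2}\delta^{-1/3+\epsilon}$ \emph{is} Biro's theorem: everything else in your write-up is the classical machinery which, combined with the local Weyl law bound $\sum_{t_j\leq T}\langle f|u_j|^2,1\rangle\ll T^2$, yields only $X^{1/2}\delta^{-1/2}$ and hence Selberg's $O(X^{2/3})$. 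You write ``Granting this estimate, I would optimize\dots'', so the improvement from $\delta^{-1/2}$ to $\delta^{-1/3}$ --- the entire content of the result --- is assumed rather than proved.

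Beyond being unproved, the mechanism you gesture at is misdirected. The oscillation $e^{\pm it\log X}$ in $h_\pm(t)$ is a spectral-side phenomenon; once you pass to the geometric side of $\int f(z)K_\pm(z,z)\,d\mu(z)$, the summands are the integrals $\int f(z)\,k_\pm(u(\gamma z,z))\,d\mu(z)$, and $k_\pm$ is (a convolution of) characteristic functions, so there is no phase for stationary-phase or Bessel asymptotics to exploit, and for a general cofinite $\Gamma$ there is no arithmetic control on the multiset of displacement lengths $\rho(z,\gamma z)$ either. Biro's actual route, which the paper cites and only sketches rather than reproves, extracts cancellation from a different source: his generalization \cite{biro} of the Selberg trace formula pairs the automorphic kernel not with a general test function but with an automorphic \emph{eigenfunction} $u$, producing $\int_{\Gamma\backslash\mathbb{H}}|u_j|^2u\,d\mu$ on the spectral side and, on the geometric side, hyperbolic and elliptic orbital integrals weighted by period integrals of $u$; it is the decay and cancellation of these eigenfunction periods that beats $\delta^{-1/2}$, with a general $f$ handled through its spectral decomposition (the constant component being controlled by the ordinary trace formula). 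Without either supplying a proof of your granted estimate or invoking \cite{biro2} outright --- which would make the argument circular --- the proposal proves only $O(X^{2/3})$, not the bound (\ref{birobound}).
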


Biro was able to deduce his result by using his modification of the Selberg trace formula from \cite{biro}. In this work, instead of estimating the integral of the automorphic kernel $K(z,w)$ over the diagonal, Biro worked with the integral of the automorphic kernel against some automorphic function $u$ of eigenvalue $\lambda$. On this way, he deduced an identity between the geometric and the spectral side, where in the spectral side of the new trace formula he appeared the integrals 
\begin{eqnarray*}
\int_{\GmodH} |u_j(z)|^2 u(z)  d \mu(z).
\end{eqnarray*}
in the place of the usual $L^2$-norms of $u_j$ appearing in Selberg's formula. In the geometric side, he appeared weighted orbital integrals in the place of regular orbital integrals for the elliptic and hyperbolic terms (where the weight function depends on periods of $u$). The appearance of the period integrals of $u$ on the geometric side is the key ingredient that imply the improvement on the error term in (\ref{birobound}).

\subsection{Bounds for inner products of eigenfunctions} \label{boundsforinnerproducts}

We refer to the following uniform upper bound for the inner product of two eigenfunctions. The proof given in \cite[Theorem~5.1]{petridisrisager2} relies on bounds for weight $k$ eigenfunctions of $\Gamma$. Although it holds for a general cofinite group $\Gamma$, we state its simplified version only for our special case of a cocompact group. 

\begin{proposition} \label{propositionsuqresofeigenfunctions} For $f$ a smooth function on the compact surface $\GmodH$, $u_m, u_k$ two Maass forms and $a>0$ we have the following bound:
\begin{eqnarray}
\langle f |u_m|^2, u_k \rangle   \ll_{a,f} \left(  \frac{1+|t_m|}{1+|t_k|}   \right)^a    \|u_m\|_{\infty} \|u_m\|_2 \| u_k \|_2.
\end{eqnarray}
\end{proposition}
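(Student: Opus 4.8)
The plan is to exploit that $u_k$ is a Laplace eigenfunction and to integrate by parts repeatedly against its eigenvalue equation, trading the decay $\lambda_k^{-1}\asymp(1+|t_k|)^{-2}$ at each step for derivatives landing on $f|u_m|^2$. Since $\GmodH$ is compact there are no boundary contributions and $-\Delta$ is self-adjoint, with $-\Delta u_k=\lambda_k u_k$ and $\lambda_k=1/4+t_k^2$, so for every integer $N\ge 0$
\[
\langle f|u_m|^2,u_k\rangle=\lambda_k^{-N}\,\langle (-\Delta)^N(f|u_m|^2),u_k\rangle,
\]
and Cauchy--Schwarz gives
\[
|\langle f|u_m|^2,u_k\rangle|\le \lambda_k^{-N}\,\|(-\Delta)^N(f|u_m|^2)\|_2\,\|u_k\|_2 .
\]
Thus it suffices to prove the smoothing estimate
\[
\|(-\Delta)^N(f|u_m|^2)\|_2\ll_{f,N}(1+|t_m|)^{2N}\,\|u_m\|_\infty\|u_m\|_2 ,
\]
since, combined with $\lambda_k^{-N}\asymp(1+|t_k|)^{-2N}$, this is exactly the claimed bound with $a=2N$.

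To establish the smoothing estimate I would expand $(-\Delta)^N\big(f\,u_m\overline{u_m}\big)$ by the Leibniz rule. As $(-\Delta)^N$ is an invariant differential operator of order $2N$ with coefficients bounded on the compact surface, this produces a finite sum, with combinatorial constants depending only on $f$ and $N$, of terms of the shape $(\partial^{(\alpha)}f)(\partial^{(\beta)}u_m)(\partial^{(\gamma)}\overline{u_m})$, where $\partial^{(\cdot)}$ denotes invariant differential operators of the indicated orders and $|\beta|+|\gamma|\le 2N$. Since $f$ is fixed and smooth, $\|\partial^{(\alpha)}f\|_\infty\ll_{f,N}1$. Applying H\"older with the $u_m$-factor in $L^\infty$ and the $\overline{u_m}$-factor in $L^2$ bounds each term by $\ll_{f,N}\|\partial^{(\beta)}u_m\|_\infty\,\|\partial^{(\gamma)}u_m\|_2$. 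The $L^2$-factor is controlled by the eigenvalue equation: integrating by parts,
\[
\|\partial^{(\gamma)}u_m\|_2^2\ll \langle(-\Delta)^{|\gamma|}u_m,u_m\rangle+\dots=\lambda_m^{|\gamma|}\|u_m\|_2^2+\dots,
\]
so $\|\partial^{(\gamma)}u_m\|_2\ll(1+|t_m|)^{|\gamma|}\|u_m\|_2$, with implied constants depending only on the geometry of $\GmodH$ (here $\lambda_m\asymp(1+|t_m|)^2$).

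The remaining, and genuinely delicate, input is the sup-norm bound for the derivatives of $u_m$,
\[
\|\partial^{(\beta)}u_m\|_\infty\ll(1+|t_m|)^{|\beta|}\,\|u_m\|_\infty .
\]
This is where the bounds for weight-$k$ eigenfunctions of $\Gamma$ enter: each application of an invariant derivative to $u_m$ is, up to bounded factors, the image of $u_m$ under a Maass raising or lowering operator, hence a weight-shifted automorphic form that is again an eigenfunction of the corresponding weight-$k$ Laplacian with eigenvalue comparable to $\lambda_m$. One then needs a local elliptic (Bernstein-type) estimate showing that each weight-raising step costs at most a factor $(1+|t_m|)$ in the sup norm. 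I expect this to be the main obstacle, since sup norms do not transform under differentiation as cleanly as $L^2$-norms do, and here one must relate $\|\partial^{(\beta)}u_m\|_\infty$ to $\|u_m\|_\infty$ itself rather than to an $L^2$-norm; this is precisely the content isolated in \cite[Theorem~5.1]{petridisrisager2}. Granting it, each Leibniz term is $\ll(1+|t_m|)^{|\beta|+|\gamma|}\|u_m\|_\infty\|u_m\|_2\ll(1+|t_m|)^{2N}\|u_m\|_\infty\|u_m\|_2$, and summing the finitely many terms yields the smoothing estimate.

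This proves the proposition for even integers $a=2N$, and I would pass to arbitrary $a>0$ by monotonicity in the exponent. When $|t_m|\le|t_k|$ the ratio $(1+|t_m|)/(1+|t_k|)$ is at most $1$, so applying the even-integer bound with $N=\lceil a/2\rceil$ and using $2N\ge a$ produces a bound no larger than the one with exponent $a$. When $|t_m|>|t_k|$ the ratio exceeds $1$, and the trivial estimate
\[
|\langle f|u_m|^2,u_k\rangle|\le\|f\|_\infty\,\|u_m\|_\infty\|u_m\|_2\,\|u_k\|_2
\]
(the case $N=0$) is already dominated by the right-hand side, since $\big((1+|t_m|)/(1+|t_k|)\big)^a\ge 1$. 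Taking the larger of the two implied constants gives the stated bound for every $a>0$.
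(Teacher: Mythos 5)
Your proposal is correct and follows essentially the same route as the paper's source for this proposition: the paper gives no independent proof, deferring entirely to \cite[Theorem~5.1]{petridisrisager2}, whose argument likewise trades powers of $\lambda_k^{-1}$ through the eigenvalue equation for derivatives falling on $f|u_m|^2$ and controls sup-norms of derivatives of $u_m$ via bounds for weight-$k$ eigenfunctions (Maass raising/lowering operators) --- precisely the Bernstein-type input you isolate as the main obstacle. For a complete write-up note only that the identity $\langle f|u_m|^2,u_k\rangle=\lambda_k^{-N}\langle(-\Delta)^N(f|u_m|^2),u_k\rangle$ and the comparison $\lambda_k^{-N}\asymp(1+|t_k|)^{-2N}$ require $t_k$ real, so the finitely many small eigenvalues $\lambda_k\le 1/4$ (where $t_k$ is imaginary and $\lambda_k$ may vanish) must be dispatched by your trivial bound, which applies there because $(1+|t_m|)/(1+|t_k|)$ is bounded below.
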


\section{Proof of the main theorem}   \label{proofofthemainresult} 

\subsection{Local average of a Maass form over CM points} 
Before proceeding to the proof of our main result, we start with a proposition for the average of  Maass forms multiplied with $f$ over Heegner points. Set 
\begin{eqnarray} \label{definitiongdt}
 G(d, t_j) = \frac{1}{h(D, d)} \sum_{ z \in \Lambda_d} f(z) |u_j(z)|^2 - \frac{1}{\vol(\mathcal{X}(D,1))} \int_{\mathcal{X}(D,1)} f(z) |u_j(z)|^2 d \mu(z).
\end{eqnarray}
We have the following estimate. 
\begin{proposition} \label{propositionboundsgdt}
Let $f$ be a smooth function on $\mathcal{X}(D,1)$ and $u_j(z)$ a Maass form on $\mathcal{X}(D,1)$ with eigenvalue $\lambda_j = 1/4+t_j^2 >1/4$. Then
\begin{eqnarray*}
G(d, t_j) =  O_{f, \Gamma, \epsilon} ( \|u_j \|_{\infty} |d|^{-1/12+\epsilon} |t_j|^{1+\epsilon}).
\end{eqnarray*}
\end{proposition}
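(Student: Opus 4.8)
The plan is to estimate $G(d,t_j)$ by applying Duke's equidistribution theorem (\ref{dukecocompact}) to the function $f|u_j|^2$, but quantitatively, tracking the dependence on the spectral parameter $t_j$. The natural strategy is to expand $f|u_j|^2$ in the basis of Hecke--Maass forms $\{u_k\}$ together with the constant function. Writing the spectral expansion
\begin{eqnarray*}
f(z)|u_j(z)|^2 = \frac{\langle f|u_j|^2, 1\rangle}{\vol(X(D,1))} + \sum_{k \geq 1} \langle f|u_j|^2, u_k\rangle\, u_k(z),
\end{eqnarray*}
the constant term is exactly the integral appearing in the definition (\ref{definitiongdt}) of $G(d,t_j)$, once we note $\langle f|u_j|^2, 1\rangle = \int_{X(D,1)} f|u_j|^2\,d\mu$ and that the normalized constant eigenfunction is $u_0 = \vol(X(D,1))^{-1/2}$. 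Therefore applying the averaging operator $\frac{1}{h(D,d)}\sum_{z\in\Lambda_d}(\cdot)$ to this expansion and subtracting the constant term, the leading pieces cancel and we are left with
\begin{eqnarray*}
G(d,t_j) = \sum_{k \geq 1} \langle f|u_j|^2, u_k\rangle\, \frac{W(d,t_k)}{h(D,d)},
\end{eqnarray*}
which expresses $G(d,t_j)$ in terms of the Weyl sums $W(d,t_k)$ and the triple inner products controlled by Proposition \ref{propositionsuqresofeigenfunctions}.

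The next step is to bound this sum over $k$. First I would insert the estimate of Proposition \ref{propositionsuqresofeigenfunctions},
\begin{eqnarray*}
\langle f|u_j|^2, u_k\rangle \ll_{a,f} \left(\frac{1+|t_j|}{1+|t_k|}\right)^a \|u_j\|_\infty \|u_j\|_2 \|u_k\|_2,
\end{eqnarray*}
using $\|u_j\|_2 = \|u_k\|_2 = 1$ by normalization. The factor $(1+|t_k|)^{-a}$ for large $a$ provides rapid decay in the spectral parameter of $u_k$, so the sum over $k$ effectively localizes to the range $|t_k| \lesssim |t_j|$. For the Weyl sums I would apply the Cauchy--Schwarz inequality after a dyadic decomposition of the $t_k$-range, combining the pointwise Weyl bound with the second-moment bound of Proposition \ref{propositionweylsums}: on each dyadic block $T \leq t_k \leq 2T$ one controls $\sum_k |W(d,t_k)|/h(D,d)$ by $(\#\{k : T \leq t_k \leq 2T\})^{1/2}$ times the square-root of (\ref{weyls sum improvement on average}), where the counting factor is $\ll T^2$ by Weyl's law. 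This produces a per-block bound of shape $|d|^{-1/12+\epsilon} T^{2+\epsilon}$, and the decay factor $(1+|t_j|)^a T^{-a}$ makes the geometric sum over dyadic $T$ converge, with the total dominated by the terms near $T \approx |t_j|$.

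Carrying this through, the exponent $|d|^{-1/12+\epsilon}$ in the target bound arises as the square-root of the $|d|^{-1/6+\epsilon}$ in (\ref{weyls sum improvement on average}), the spectral factor $|t_j|^{1+\epsilon}$ arises from combining the $|t_j|^a$ from the inner-product bound with the $T^{2+\epsilon}$ and $T^{-a}$ contributions summed against the Weyl-law count near $T\approx|t_j|$, and the $\|u_j\|_\infty$ factor is inherited directly from Proposition \ref{propositionsuqresofeigenfunctions}. The main obstacle I anticipate is bookkeeping the interplay between the three sources of $t$-dependence so that the powers of $|t_j|$ collapse to the claimed exponent $1+\epsilon$: one must choose the parameter $a$ large enough to guarantee absolute convergence of the tail $|t_k| \gg |t_j|$ while verifying that the dominant contribution near $|t_k|\approx|t_j|$ does not exceed $|t_j|^{1+\epsilon}$ after the Cauchy--Schwarz step. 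A secondary subtlety is ensuring the spectral expansion converges in a pointwise-summable sense when evaluated at the CM points, which is justified since $f|u_j|^2$ is smooth and the rapid decay of the inner products guarantees absolute convergence uniformly on the compact surface.
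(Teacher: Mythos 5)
Your proposal reproduces the paper's own argument almost step for step: the spectral expansion of $F=f|u_j|^2$ identifying $G(d,t_j)$ with $\sum_{k\geq 1}\langle F,u_k\rangle W(d,t_k)/h(D,d)$, Cauchy--Schwarz on dyadic blocks against the second moment (\ref{weyls sum improvement on average}), the triple-product bound of Proposition \ref{propositionsuqresofeigenfunctions} with the free parameter $a$, and a two-regime dyadic summation. But the step you yourself flag as the ``main obstacle'' is a genuine gap, and with the ingredients you invoke it cannot be closed. Your per-block bound is
\begin{eqnarray*}
\ll_{a,f}\ \|u_j\|_{\infty}\left(\frac{1+|t_j|}{1+T}\right)^{a}|d|^{-1/12+\epsilon}(1+T)^{2+\epsilon},
\end{eqnarray*}
where the factor $(1+T)^{2+\epsilon}$ is forced: $(\#\{k:T\leq t_k\leq 2T\})^{1/2}\approx T$ from Weyl's law times $(1+T)^{1+\epsilon}$ from the square root of (\ref{weyls sum improvement on average}). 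At the dominant block $T\approx |t_j|$ the ratio $\left((1+|t_j|)/(1+T)\right)^{a}$ is $\approx 1$ for \emph{every} choice of $a$, so that block alone contributes $\|u_j\|_{\infty}|d|^{-1/12+\epsilon}|t_j|^{2+\epsilon}$; no bookkeeping ($a$ small on low blocks, $a$ large on the tail) can push the total below $|t_j|^{2+\epsilon}$. Thus your argument, as written, proves the proposition only with exponent $2+\epsilon$ in place of $1+\epsilon$. (For what it is worth, the paper's own proof performs exactly this pointwise-bound-times-count step and the same two-regime choice of $a$, so read literally it incurs the same loss; but that does not make the claimed exponent follow.)

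The missing idea needed to reach $|t_j|^{1+\epsilon}$ is to avoid paying the eigenvalue count on the blocks with $T\lesssim |t_j|$: by Bessel's inequality for the orthonormal system $\{u_k\}$,
\begin{eqnarray*}
\sum_{T\leq t_k\leq 2T}\left|\langle F,u_k\rangle\right|^2 \ \leq\ \|F\|_2^2 \ \ll_f\ \|u_j\|_{\infty}^{2}\|u_j\|_2^2\ =\ \|u_j\|_{\infty}^{2},
\end{eqnarray*}
with no factor $T^2$ at all. Feeding this into the Cauchy--Schwarz step gives the per-block bound $\|u_j\|_{\infty}|d|^{-1/12+\epsilon}(1+T)^{1+\epsilon}$; summing over dyadic $T\leq (1+|t_j|)^{1+\delta}$ yields $\|u_j\|_{\infty}|d|^{-1/12+\epsilon}(1+|t_j|)^{(1+\delta)(1+\epsilon)}$, while on the tail $T>(1+|t_j|)^{1+\delta}$ your bound with $a$ chosen large compared to $1/\delta$ makes the dyadic sum negligible. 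Letting $\delta$ and $\epsilon$ be arbitrarily small gives the stated exponent. Two further points to repair: your expansion over $k\geq 1$ silently includes the exceptional eigenvalues $0<\lambda_k\leq 1/4$ (imaginary $t_k$), to which neither (\ref{weyls sum improvement on average}) nor your dyadic decomposition applies, so these finitely many terms must be treated separately (the paper bounds them trivially); and the pointwise convergence issue you raise at the end is indeed settled by the rapid decay in Proposition \ref{propositionsuqresofeigenfunctions}, so that part of your discussion is fine.
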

\begin{proof}
Applying the spectral expansion to $F(z) = f (z) |u_j(z)|^2$ and averaging over the CM points we conclude
\begin{eqnarray*}
G(d, t_j) = \sum_{\lambda_k \leq 1/4} \langle F, u_k \rangle \frac{W(d, t_k)}{h(D,d)} + \sum_{t_k \in \mathbb{R}-\{0\}} \langle F, u_k \rangle  \frac{W(d, t_k)}{h(D,d)},
\end{eqnarray*}
where $\langle,\rangle$ denotes the Petersson inner product in $\L(\mathcal{X}(D,1))$. For a small eigenvalue $\lambda_k \leq 1/4$ we have
\begin{eqnarray*}
W(d, t_k) \ll_{\Gamma} h(D,d) \Longrightarrow \langle F, u_k \rangle \frac{W(d, t_k)}{h(D,d)} \ll_{\Gamma} \| f \|_{\infty}.
\end{eqnarray*}
We conclude the contribution of the small eigenvalues is $O_{f, \Gamma} (1)$. For the big eigenvalues $\lambda_k \geq 1/4$, Proposition \ref{propositionweylsums} and Cauchy-Schwarz in the interval $[T,2T]$ imply
\begin{eqnarray} \label{cauchyschwarzinner}
\sum_{T \leq t_k \leq 2T} \langle F, u_k \rangle  \frac{W(d, t_k)}{h(D,d)} \ll \left(\sum_{T \leq t_k \leq 2T} \left|\langle F, u_k \rangle \right|^2 \right)^{1/2} |d|^{-\frac{1}{12}+\epsilon} (1+T)^{1+\epsilon}.
\end{eqnarray}
Applying the estimate of Proposition \ref{propositionsuqresofeigenfunctions} for the sum over $t_k \in [T,2T]$, since all Maass forms are $\L$-normalized we get the left side of (\ref{cauchyschwarzinner}) is bounded by 
\begin{eqnarray*}
\ll_{a,f} \|u_j\|_{\infty}  \left( 1+|t_j|   \right)^{a}   |d|^{-\frac{1}{12}+\epsilon} (1+T)^{2-a+\epsilon}
\end{eqnarray*}
for any $a>0$. On the other hand, using Bessel's inequality we get        
\begin{eqnarray} \label{cauchyschwarzinner}
 \left(\sum_{T \leq t_k \leq 2T} \left|\langle f (z) |u_j(z)|^2, u_k \rangle \right|^2 \right)^{1/2}  \ll \| f (z) |u_j(z)|^2    \|_2 \ll_f   \| u_j\|_{\infty} \|u_j\|_{2} = \|u_j\|_{\infty}
\end{eqnarray}
uniformly for any $T$. Balancing between the two expressions, for any $A \gg 1$  sufficient large and fixed we get
\begin{eqnarray*} 
\sum_{ t_k  \in \mathbb{R}-\{0\}} \langle F, u_k \rangle  \frac{W(d, t_k)}{h(D,d)} \ll_{f, \Gamma, \epsilon}
\|u_j\|_{\infty}  |d|^{-\frac{1}{12}+\epsilon}   \left( A^{1+\epsilon}  +  \left( 1+|t_j|   \right)^{a}     A^{2-a + \epsilon}  \right).
\end{eqnarray*}
Picking $A \asymp t_j^{\frac{a}{a-1}}$ with $a$ large enough we deduce the statement.
\end{proof}

\subsection{Proof of Theorem \ref{ourmaintheorem}} 
We can now give the proof of our theorem. By a standard argument, we need to apply the pre-trace formula to suitable chosen kernels. We refer to \cite[Section~2]{cham2}, \cite[Section~6]{petridisrisager2} for more details on the calculations. For the proof we will refer to some standard estimates for the Selberg/Harish-Chandra transforms of the characteristic kernel $k(u) = \chi_{[0, (X-2)/4]}(u)$ and smooth approximations of $k(u)$ (see Proposition \ref{selberghasichandratransform}).

\begin{proof}
Define $R = \cosh^{-1} (X/2)$, denote by $u(z,w)$ the standard point-pair invariant function
\begin{eqnarray}
u(z,w) = \frac{|z-w|^2}{4 \Im(z) \Im(w)},
\end{eqnarray}
and let $k(u)$ be the kernel
\begin{eqnarray}
k(u) = \chi_{[0, (X-2)/4]}(u),
\end{eqnarray}
where by $\chi_{[A,B]}(u)$ we denote the characteristic function of the interval $[A,B]$. Recall that $\rho(z,w) \leq R$ if and only if $4u(z,w) \leq X-2$, and thus it is straightforward to see that the automorphic kernel of $k(u)$ satisfies 
\begin{eqnarray}
N(X,z,z) = K(z,z) := \sum_{\gamma \in \Gamma} k(u(\gamma z,z)).
\end{eqnarray}
The first problem arising in the study of the hyperbolic counting problem is that $k(u)$ is not an admissible kernel for the pre-trace formula; this can be checked using the asymptotic behaviour of its Selberg/Harish-Chandra transform $h(t)$ of $k$ (see \cite[eq.~(1.62)]{iwaniec} for the definition of the transform), and one has to approximate $k$ by some suitable test functions.

 For this reason, for $\delta>0$ small (that will be specified later), we define the smoothed kernel $k_{\delta}(u)$ by
\begin{eqnarray}
k_{\delta}(u) = \frac{1}{4 \pi \sinh^2 (\delta/2)} \chi_{[0, (\cosh \delta -1)/2]}(u).
\end{eqnarray}
We also define the hyperbolic convolution of two functions by
\begin{eqnarray}
k_1 \ast k_2 (u(z,w))= \int_{\mathbb{H}} k_1 (u(z,v)) k_2 (u(v,w)) d \mu(v),
\end{eqnarray}
and the kernels $k_{\pm}(u)$ defined by
\begin{eqnarray}
k_{\pm}(u) = \left(\chi_{[0,(\cosh (R\pm \delta)-1)/2]} \ast k_{\delta}\right) (u)
\end{eqnarray}
If $h_i(t)$ stands for the S/H-C transform of $k_i(u)$, then the S/H-C transform of the hyperbolic convolution $k_1 \ast k_2$ is given the product $h_1(t) h_2(t)$ (see \cite[pages~322-323]{cham2}). Using this observation we can compute the S/H-C transform of kernels $k_{\pm}(u)$.
\begin{proposition} \label{selberghasichandratransform} With notation as above, the S/H-C transforms $h_{\pm} (t)$ of the kernels $k_{\pm}(u)$ satisfy:
\\a) If $\lambda_j <1/4$, i.e. $s_j=1/2+it_j$ with $t_j \notin \mathbb{R}$ then
\begin{eqnarray}
h_{\pm} (t_j) = \sqrt{\pi} \frac{\Gamma(|t_j|)}{\Gamma(|t_j| + 3/2)} X^{1/2+|t_j|} + O (X \delta + X^{1/2}).
\end{eqnarray}
b)  If $\lambda_j = 1/4$, i.e. $t_j =0$ then
\begin{eqnarray}
h_{\pm} (0) = O (X^{1/2} \log X).
\end{eqnarray}
c) If $\lambda_j >1/4$, i.e. $t_j \in \mathbb{R}-\{0\}$ then
\begin{eqnarray}
h_{\pm} (t_j) = O \left( \frac{X^{1/2}}{|t_j|^{3/2}} \min \left\{1, \frac{1}{(\delta |t_j|)^{3/2}} \right\} \right).
\end{eqnarray}
For $t$ real we also have the uniform bound $h_{\pm} (t_j) = O \left( X^{1/2} \log X \right)$.
\end{proposition}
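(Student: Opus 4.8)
The plan is to reduce everything to the transform of a sharp ball indicator, exploiting that the S/H-C transform linearizes hyperbolic convolution. Since $k_{\pm}(u) = \left(\chi_{[0,(\cosh(R\pm\delta)-1)/2]} \ast k_{\delta}\right)(u)$, if I write $g_{\rho}(t)$ for the transform of the ball kernel $\chi_{[0,(\cosh\rho-1)/2]}$ at a generic radius $\rho$ and $h_{\delta}(t)$ for the transform of the mollifier $k_{\delta}$, then $h_{\pm}(t) = g_{R\pm\delta}(t)\,h_{\delta}(t)$. So the whole proposition follows from (i) the asymptotics of $g_{\rho}(t)$ as $\rho\to\infty$ in the three spectral ranges and (ii) a decay bound for $h_{\delta}(t)$, combined with the elementary relation $2\cosh(R\pm\delta) = X e^{\pm\delta}(1+O(X^{-2}))$.

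For (i) I would first run the standard Abel inversion chain $k\mapsto q\mapsto g\mapsto h$ on the ball kernel $\chi_{[0,(\cosh\rho-1)/2]}$. Computing $q(v) = \int_v^{\infty} \chi_{[0,(\cosh\rho-1)/2]}(u)(u-v)^{-1/2}\,du = 2\sqrt{(\cosh\rho-1)/2-v}$ on the support and $g(r) = 2q(\sinh^2(r/2))$ gives the explicit profile $g_{\rho}(r) = 2\sqrt{2}\,\sqrt{\cosh\rho-\cosh r}$ for $|r|\le\rho$ (and $0$ otherwise), so that $g_{\rho}(t) = 2\sqrt{2}\int_{-\rho}^{\rho} e^{irt}\sqrt{\cosh\rho-\cosh r}\,dr$. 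In the exceptional range (case a) I set $t_j = -i(s_j-1/2)$ with $s_j\in(1/2,1)$, so that $e^{irt_j} = e^{r(s_j-1/2)}$ and the integral concentrates near $r=\rho$. Replacing $\cosh$ by $\tfrac12 e^{(\cdot)}$ (an approximation costing $O((\cosh\rho)^{1/2})$) and substituting $u=e^{r-\rho}$ turns the integral into the Beta integral $\int_0^1 u^{s_j-3/2}(1-u)^{1/2}\,du = B(s_j-\tfrac12,\tfrac32) = \tfrac{\sqrt\pi}{2}\,\Gamma(s_j-\tfrac12)/\Gamma(s_j+1)$, which after restoring the prefactors yields the main term $\sqrt{\pi}\,\frac{\Gamma(s_j-1/2)}{\Gamma(s_j+1)}(2\cosh\rho)^{s_j}$ with error $O((\cosh\rho)^{1/2})$.

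For $t_j=0$ (case b) there is no oscillation, so the trivial estimate $g_{\rho}(0) = 2\sqrt{2}\int_{-\rho}^{\rho}\sqrt{\cosh\rho-\cosh r}\,dr \ll (\cosh\rho)^{1/2}\rho \ll X^{1/2}\log X$ holds because $R\asymp\log X$. This same absolute-value bound, applied with $e^{irt}$ in place of $1$, gives the uniform estimate $g_{\rho}(t)\ll X^{1/2}\log X$ for every real $t$. For real $t_j\neq 0$ (case c) the refined decay comes from the square-root vanishing of the integrand at the endpoints $r=\pm\rho$: near $r=\rho$ one has $\cosh\rho-\cosh r\approx\sinh\rho\,(\rho-r)$, and the Fourier transform of an endpoint $(\rho-r)^{1/2}$ singularity decays like $|t|^{-3/2}$ (Erd\'elyi's lemma, or integration by parts), producing $g_{\rho}(t_j)\ll (\cosh\rho)^{1/2}|t_j|^{-3/2}$. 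Transporting these to $\rho=R\pm\delta$ via $2\cosh(R\pm\delta)=Xe^{\pm\delta}(1+O(X^{-2}))$ only alters constants in (b) and (c), while in (a) it replaces $(2\cosh(R\pm\delta))^{s_j}$ by $X^{s_j}(1+O(\delta))$, contributing the additional $O(X\delta)$ error.

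It remains to estimate the mollifier factor $h_{\delta}(t)$. Running the identical Abel computation on the normalized small ball $k_{\delta}$ (supported on a disk of radius $\asymp\delta$, of essentially unit mass), the value at $t=0$ is a bounded constant $h_{\delta}(0)=1+o(1)$, so that $h_{\delta}(t_j)=1+O(\delta)$ on the finite exceptional set, while for $\delta|t|\gg 1$ the same endpoint analysis as in case (c) gives the gain $h_{\delta}(t)\ll (\delta|t|)^{-3/2}$; altogether $h_{\delta}(t)\ll\min\{1,(\delta|t|)^{-3/2}\}$. Multiplying $g_{R\pm\delta}$ by $h_{\delta}$ then delivers the three displayed bounds as well as the uniform bound $O(X^{1/2}\log X)$ (from the case (b) estimate for $g_{R\pm\delta}$ together with $h_{\delta}=O(1)$). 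I expect the main obstacle to be the oscillatory-integral analysis of case (c): one must justify the $|t|^{-3/2}$ endpoint asymptotics uniformly in the large parameter $\rho$ and keep the error terms controlled, and similarly extract the Gamma-quotient constant in (a) cleanly; the convolution bookkeeping and the $R\to R\pm\delta$ perturbation are then routine.
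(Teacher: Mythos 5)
Your proposal is correct, but it takes a genuinely different route from the paper: the paper's entire proof of this proposition is a citation --- part (a) is quoted from \cite[eq.~(2.14), p.~323]{cham2}, part (b) from \cite[Lemma~2.4(d)]{cham2}, and part (c) from \cite[eq.~(5.5), (5.10)]{petridisrisager1} and \cite[eq.~(6.3), (6.4)]{petridisrisager2} --- whereas you rederive everything from scratch. Your derivation (factor $h_{\pm}=g_{R\pm\delta}\cdot h_{\delta}$ via the convolution property, run the Abel chain to get the profile $2\sqrt{2}\sqrt{\cosh\rho-\cosh r}$, evaluate the exceptional-spectrum integral as a Beta function to extract $\sqrt{\pi}\,\Gamma(s_j-\tfrac12)/\Gamma(s_j+1)\,(2\cosh\rho)^{s_j}+O((\cosh\rho)^{1/2})$, obtain the $|t|^{-3/2}$ decay from the square-root vanishing at the endpoints, and bound the mollifier by $h_{\delta}(t)\ll\min\{1,(\delta|t|)^{-3/2}\}$) is precisely the content of the results being cited, so in substance you are reproving Chamizo and Petridis--Risager rather than replacing them. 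What your route buys is a self-contained argument with explicit control of all constants and error terms; what the paper buys is brevity, by outsourcing the one genuinely technical step --- the uniformity in $\rho\approx\log X$ of the endpoint oscillatory estimates --- which you correctly flag as the main obstacle. Concretely, in case (c) one integration by parts kills the boundary terms since $g_{\rho}(\pm\rho)=0$, and the remaining integral of $g_{\rho}'(r)e^{irt}$, whose integrand blows up like $(\sinh\rho)^{1/2}(\rho-|r|)^{-1/2}$ at both ends, must be shown to be $O(X^{1/2}|t|^{-1/2})$ uniformly in $\rho$; this needs a two-scale splitting or Gamma-function asymptotics and is exactly where the cited computations do their work, so your plan is right but not yet a complete proof at that point. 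One further remark: with the paper's literal normalization $k_{\delta}=\chi_{[0,(\cosh\delta-1)/4]}/(4\pi\sinh^{2}(\delta/2))$ and the convention $\int_{\mathbb{H}}k(u(z,w))\,d\mu(w)=4\pi\int_{0}^{\infty}k(u)\,du$, the mollifier has total mass $1/2$ rather than $1$; your unit-mass reading is the intended one (it is forced both by the main term in part (a) and by the sandwich $k_{-}\le k\le k_{+}$), so this is a constant-factor slip in the paper's definition, not a gap in your argument.
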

\begin{proof}
Part $a)$ follows from \cite[eq.(2.14), p.~323]{cham2} and part $b)$ from \cite[Lemma~2.4, (d)]{cham2}. Part $c)$ is calculated in \cite[eq.~(5.5), (5.10)]{petridisrisager1}, \cite[eq.~(6.3), (6.4)]{petridisrisager2}.
\end{proof}
Since $k_{-} \leq k \leq k_{+}$ \cite[page~15]{petridisrisager2}, summing over the group we deduce the inequalities
\begin{eqnarray*}
K^- (z,z) \leq N(X,z,z) \leq K^+ (z,z),
\end{eqnarray*}
hence
\begin{eqnarray}
\sum_{ z \in \Lambda_d} f(z) E(X;z,z) \ll \min_{\pm}\left|\sum_{ z \in \Lambda_d} f(z) \left( K^{\pm}(z,z) - M(X;z,z) \right)\right|.
\end{eqnarray}
The kernels $k_{\pm}$ are admissible in the pre-trace formula, which implies the expansions
\begin{eqnarray}
K^{\pm} (z,z)  = \sum_{t_j} h^{\pm} (t_j) |u_j(z)|^2.
\end{eqnarray}
Since we have a finite number of eigenvalues $0 \leq \lambda_j \leq 1/4$, we conclude
\begin{eqnarray} 
&&\frac{1}{h(D, d)} \sum_{ z \in \Lambda_d} f(z) \left( K^{\pm}(z,z) - M(X;z,z) \right) \nonumber \\
&&= \sum_{t_j \in \mathbb{R}} \frac{h_{\pm}(t_j)}{h(D,d)} \sum_{ z \in \Lambda_d} f(z) |u_j(z)|^2 + O_{f, \Gamma} (\delta X + X^{1/2} \log X). 
\end{eqnarray}
The last sum can be rewritten as
\begin{eqnarray} \label{splitoftheterm}
 \sum_{t_j \in \mathbb{R}} \frac{h_{\pm}(t_j)}{\vol(\mathcal{X}(D,1))} \int_{\mathcal{X}(D,1)} f(z) |u_j(z)|^2 d &+&   \sum_{t_j \in \mathbb{R}} h_{\pm}(t_j) G(d, t_j) \nonumber \\
&+& O(\delta X + X^{1/2} \log X).  
\end{eqnarray}
By Theorem \ref{theorembiro}, the first sum in (\ref{splitoftheterm}) is bounded by $X^{5/8+\epsilon}$. For the second sum, we first estimate $G(d,t_j)$ in dyadic intervals as follows. For real $t_j \in [T,2T]$, Proposition \ref{propositionboundsgdt} and bound (\ref{iwaniecsarnakbound}) implies
\begin{eqnarray} \label{interpolation1}
 \sum_{T \leq t_j \leq 2T} G(d,t_j)  \ll_{f, \Gamma, \epsilon}  |d|^{-1/12+\epsilon} T^{41/12 + \epsilon}.
\end{eqnarray}
Secondly, interchanging the sums and using inequality (\ref{localweylslawcorollary}) we get
\begin{eqnarray*}
 \sum_{t_j \in \mathbb{R}} \frac{1}{h(D,d)} \sum_{ z \in \Lambda_d} f(z) |u_j(z)|^2 \ll_{f, \Gamma} T^{2}
\end{eqnarray*}
hence
\begin{eqnarray} \label{interpolation2}
 \sum_{T \leq t_j \leq 2T} G(d,t_j)  \ll_{f, \Gamma} T^{2}.
\end{eqnarray}
Interpolation between estimates (\ref{interpolation1}), (\ref{interpolation2}) gives
\begin{eqnarray} \label{interpolationbound}
\sum_{T \leq t_j \leq 2T} G(d, t_j) \ll_{f, \epsilon} T^{2-2a} |d|^{-\frac{a}{12} +\epsilon} T^{\frac{41}{12} a +\epsilon} = |d|^{-\frac{a}{12} +\epsilon} T^{2 + \frac{17}{12} a +\epsilon}
\end{eqnarray}
for any $a \in [0,1]$. We apply estimates for the S/H-C transform from part $c)$ of Proposition \ref{selberghasichandratransform}   and estimate (\ref{interpolationbound}). We get
\begin{eqnarray}
\sum_{t_j \in \mathbb{R}} h_{\pm}(t_j) G(d, t_j) &=& \sum_{|t_j| < \delta^{-1}} h_{\pm}(t_j) G(d, t_j) + \sum_{|t_j| \geq \delta^{1}} h_{\pm}(t_j) G(d, t_j) \nonumber \\
&\ll& X^{1/2} |d|^{-a/12 +\epsilon} \delta^{-1(1/2 + 17a/12 +\epsilon)}
\end{eqnarray}
under the condition $a < 12/17$.  Thus, the discrete average of the error term is balanced by
\begin{eqnarray}
 O \left( X^{5/8+\epsilon} +X^{1/2} |d|^{-a/12+\epsilon} \delta^{-1(1/2 + 17a/12 +\epsilon)} + \delta X \right).  
\end{eqnarray}
Picking $a$ close to $12/17$ and balancing $\delta = X^{-1/5} |d|^{-2/85}$ we complete the proof of the theorem.
\end{proof}


\begin{thebibliography}{99}

\bibitem{alsina} M. Alsina and P. Bayer.
\emph{Quaternion orders, quadratic forms, and Shimura curves.} 
CRM Monograph Series, 22. American Mathematical Society, Providence, RI, 2004. xvi+196 pp. 

\bibitem{arenas} A. Arenas.
\emph{On the traces of Hecke operators.} J. Number Th. 100 (2003), no. 2, 307--312.


\bibitem{biro} A. Bir\'o.
\emph{On a generalization of the Selberg trace formula.} Acta Arith. 87 (1999), no. 4, 319--338. 

\bibitem{biro2} A. Bir\'o.
\emph{Local average of the hyperbolic circle problem for Fuchsian groups.} Mathematika 64 (2018), no. 1, 159--183.

\bibitem{blackman} T. R. Blackman and S. Lemurel.
\emph{Spectral correspondences for Maass waveforms on quaternion groups.} J. Number Theory 158 (2016), 1--22. 

\bibitem{bolte} J. Bolte and S. Johansson.
\emph{A spectral correspondence for Maass waveforms.} Geom. Funct. Anal. 9 (1999), no. 6, 1128--1155. 

\bibitem{cham2} F. Chamizo.
\emph{Some applications of large sieve in Riemann surfaces.} Acta Arith. 77 (1996), no. 4, 315--337.

\bibitem{chatz} D. Chatzakos.
\emph{$\Omega$-results for the hyperbolic lattice point problem.} Proc. Amer. Math. Soc. 145 (2017), no. 4, 1421--1437. 

\bibitem{cherubinirisager} G. Cherubini and M. Risager.
\emph{On the variance of the error term in the hyperbolic circle problem.} (to appear in Revista Matem\'atica Iberoamericana), arXiv:1512.04779 (2015).

\bibitem{delsarte} J. Delsarte.
\emph{Sur le gitter fuchsien. (French) }, C. R. Acad. Sci. Paris 214 (1942), 147--179.

\bibitem{duke} W. Duke.
\emph{Hyperbolic distribution problems and half-integral weight Maass forms.} Invent. Math. 92 (1988), no. 1, 73--90. 

\bibitem{good} A. Good.
\emph{Local analysis of Selberg's trace formula. Lecture Notes in Mathematics, 1040.} Springer-Verlag, Berlin, 1983. i+128 pp.

\bibitem{gunther}  P. G\"unther.
\emph{Gitterpunktprobleme in symmetrischen Riemannschen Räumen vom Rang 1.} Math. Nachr. 94 (1980), 5--27.

\bibitem{harcos} G. Harcos.
\emph{Equidistribution on the modular surface and $L$-functions.} Homogeneous flows, moduli spaces and arithmetic, 377--387, 
Clay Math. Proc., 10, Amer. Math. Soc., Providence, RI, 2010. 


\bibitem{harcosmichel} G. Harcos and Ph. Michel.
\emph{The subconvexity problem for Rankin-Selberg L-functions and equidistribution of Heegner points. II.} Invent. Math. 163 (2006), no. 3, 581--655. 

\bibitem{hejhal} D. Hejhal.
\emph{A classical approach to a well-known spectral correspondence on quaternion groups.} Number theory (New York, 1983–84), 127--196, 
Lecture Notes in Math., 1135, Springer, Berlin, 1985. 


\bibitem{hillparn} R. Hill and L. Parnovski. 
\emph{The variance of the hyperbolic lattice point counting function.} Russ. J. Math. Phys. 12 (2005), no. 4, 472--482.

\bibitem{hoffstein} J. Hoffstein and P. Lockhart.
\emph{Coefficients of Maass forms and the Siegel zero.} With an appendix by Dorian Goldfeld, Hoffstein and Daniel Lieman. 
Ann. of Math. (2) 140 (1994), no. 1, 161--181. 

\bibitem{huber1} H. Huber.
\emph{ \"Uber eine neue Klasse automorpher Funktionen und ein Gitterpunktproblem in der hyperbolischen Ebene. I.} Comment. Math. Helv. 30 (1956), 20--62. 

\bibitem{huber2} H. Huber.
\emph{Ein Gitterpunktproblem in der hyperbolischen Ebene.} J. Reine Angew. Math. 496 (1998), 15--53. 

\bibitem{ivic} A. Ivi\'c.
\emph{On sums of Hecke series in short intervals.} J. Théor. Nombres Bordeaux 13 (2001), no. 2, 453--468. 

\bibitem{iwaniec} H. Iwaniec.
\emph{Spectral methods of automorphic forms.} Second edition. Graduate Studies in Mathematics, 53. American Mathematical Society, Providence, RI; Revista Matemática Iberoamericana, Madrid, 2002. xii+220 pp. 

\bibitem{iwaniecsarnak} H. Iwaniec and P. Sarnak.
\emph{$L^{\infty}$ norms of eigenfunctions of arithmetic surfaces.} Ann. of Math. (2) 141 (1995), no. 2, 301--320. 

\bibitem{koyama} S. Koyama.
\emph{Prime geodesic theorem for arithmetic compact surfaces.} Int. Math. Res. Not. (1998), no. 8, 383--388. 

\bibitem{luosarnak} W. Luo and P. Sarnak, 
\emph{Quantum ergodicity of {Eigenfunctions} on $\mathrm{PSL}_2(\mathbb{Z})\backslash\mathbb{H}^2$}, Inst. Hautes \'Etudes Sci. Publ. Math. \textbf{81} (1995), 207--237.

\bibitem{michel} Ph. Michel.
\emph{The subconvexity problem for Rankin-Selberg L-functions and equidistribution of Heegner points.} Ann. of Math. (2) 160 (2004), no. 1, 185--236. 

\bibitem{michelvenkatesh} P. Michel and A. Venkatesh. 
\emph{Equidistribution, L-functions and ergodic theory: on some problems of Yu. Linnik.} In International Congress of Mathematicians. Vol. II, pages 421--457. Eur. Math. Soc., Z\"urich, 2006.

\bibitem{michelvenkatesh2} P. Michel and A. Venkatesh.
\emph{The subconvexity problem for $GL_2$.} Publ. Math. Inst. Hautes Études Sci. No. 111 (2010), 171--271. 

\bibitem{nelson1} P. Nelson,
\emph{Quantum variance on quaternion algebras I}, arXiv:1601.02526

\bibitem{nelson1} P. Nelson,
\emph{Quantum variance on quaternion algebras II}, arXiv:1702.02669

\bibitem{nelson1} P. Nelson,
\emph{Quantum variance on quaternion algebras III}, arXiv:1903.08686

\bibitem{patterson} S. J. Patterson.
\emph{A lattice-point problem in hyperbolic space.} Mathematika 22 (1975), no. 1, 81--88. 

\bibitem{petridisrisager1} Y. Petridis and M. Risager.
\emph{Local average in hyperbolic lattice point counting, with an Appendix by Niko Laaksonen.} Math. Z. 285 (2017), no. 3--4, 1319--1344. 

\bibitem{petridisrisager2} Y. Petridis and M. Risager.
\emph{Averaging over Heegner points in the hyperbolic circle problem.} Int. Math. Res. Not. 2018, no. 16, 4942--4968.

\bibitem{phirud} R. Phillips and Z. Rudnick. 
\emph{The circle problem in the hyperbolic plane.} J. Funct. Anal. 121 (1994), no. 1, 78--116. 

\bibitem{risager} M. S. Risager.
\emph{ Asymptotic densities of Maass newforms.} J. Number Theory 109 (2004), no. 1, 96--119. 


\bibitem{selberg} A. Selberg.
\emph{Equidistribution in discrete groups and the spectral theory of automorphic forms.} http://publications.ias.edu/selberg/section/2491

\bibitem{strombergsson} A. Str\"ombergsson.
\emph{Some remarks on a spectral correspondence for Maass waveforms.} Int. Math. Res. Not. (2001), no. 10, 505--517. 


\bibitem{young} M. Young.
\emph{Weyl-type hybrid subconvexity bounds for twisted L-functions and Heegner points on shrinking sets.}
J. Eur. Math. Soc. 19 (2017), no. 5, 1545--1576. 

\bibitem{zhang} S.-W. Zhang
\emph{Equidistribution of CM-Points on quaternion Shimura varieties} Int. Math. Res. Not. (2005), no. 59, 3657--3689. 




\end{thebibliography}
\end{document}